\documentclass[11pt]{amsart}
\usepackage{fullpage}
\usepackage{amsmath, amssymb, amsthm}
\usepackage[foot]{amsaddr}

\usepackage{graphicx}
\graphicspath{{./figs_ARM/}} 

\usepackage{color}
\usepackage[colorlinks=true,linkcolor=red,citecolor=blue,urlcolor=cyan]{hyperref}
\newcommand{\eqreftitle}[1]{\texorpdfstring{\eqref{#1}}{Eq.~\ref{#1}}}

\usepackage[ruled,vlined]{algorithm2e}
\usepackage{cleveref}

\newtheorem{thm}{Theorem}[section]
\newtheorem{lem}[thm]{Lemma}
\theoremstyle{remark}
\newtheorem{rem}[thm]{Remark}

\newcommand{\ie}{{\it i.e.}}
\newcommand{\eg}{{\it e.g.}}

\begin{document}
\title{An accelerated rearrangement method  \\ for two-phase composite optimization}\thanks{C.K. acknowledges support from NSF DMS-2208373, NSF DMS-2513176, and Simons Foundation SFI-MPS-TSM-00013519. B.O. acknowledges support from NSF DMS-2136198 and DMS-2513175.}

\author{Chiu-Yen Kao}
\address{Department of Mathematical Sciences, Claremont McKenna College, Claremont, CA 91711 USA}
\email{ckao@cmc.edu}

\author{Seyyed Abbas Mohammadi}
\address{Division of Mathematics, University of Dundee, Dundee DD1 4HN, United Kingdom; School of Computer Science and Applied Mathematics, University of The Witwatersrand, Braamfontein, 2000, Johannesburg, South Africa}
\email{smohammadi001@dundee.ac.uk; abbas.mohammadi@wits.ac.za}  

\author{Braxton Osting}
\address{Department of Mathematics, University of Utah, Salt Lake City, UT 84112 USA}
\email{osting@math.utah.edu}

\subjclass[2020]{
35J05, 
49M05, 
49M41, 
65B05, 
65K10, 
90C06. 
}

\keywords{Composite material optimization,
rearrangement method, 
Nesterov's acceleration, 
Poisson equation, 
Dirichlet-Laplacian eigenvalues,
clamped membrane}

\begin{abstract} 
We propose and analyze an Accelerated Rearrangement Method (ARM) for solving a class of nonconvex optimization problems involving two-phase composites. These problems include maximizing the  (work) energy of a membrane governed by the Poisson equation and minimizing the principal eigenvalue of a weighted Dirichlet-Laplacian, both subject to material distribution constraints. Building on the classical rearrangement method, we introduce momentum-like acceleration by extrapolating the Fr\'echet derivative, leading to a provably convergent algorithm. We also introduce a restarted variant that guarantees monotonic improvement of the objective.   In one dimension, we derive asymptotic convergence rates for ARM and prove that they improve upon the classical rearrangement method. Numerical experiments in both two and three dimensions confirm the accelerated convergence and demonstrate practical efficiency. 
\end{abstract}

\maketitle

\section{Introduction}
Two-phase composite optimization problems, a class of PDE-constrained optimization problems,
arise in a variety of meaningful real-world applications. Examples include minimizing the fundamental frequency of membranes or plates composed of two materials under area or volume constraints 
\cite{bozorgnia2025optimal,chen2024review,cox1991two,davoli2023spectral,kao2021extremal,krein1955,liu2017converse,mazari2022shape,mohammadi2023maximum,mohammadi2019optimal}; minimizing heat loss in systems made of materials with different thermal conductivities \cite{conca2012minimization,cox1996extremal,kang2020minimization}; designing photonic crystals with tailored optical properties \cite{kao2025semi,kao2005maximizing,Osting:2012,sigmund2008geometric}; optimizing total population through spatial resource allocation \cite{kao2022maximal,mazari2023bang,mazari2022optimisation}; and minimizing the ground state energy in nano-structures \cite{antunes2018nonlinear,mohammadi2016minimization1}, among others. We refer the interested readers to \cite{chambolle2025stability,chanillo2000symmetry,Henrot_2006,lamboley2016properties} and references therein. 

Numerical approaches to these two-phase problems require either explicit parametric representations of the interface between two phases or implicit representations of the interface such as level set  and phase field methods \cite{osher2004level}. The explicit parametric approaches are known for their simplicity and fewer degrees of freedom while implicit approaches allow for flexible topology changes during optimization. Recent developments in level set approaches include two-phase conductivity design problem with nonlinear state dependent diffusion coefficient \cite{allaire2024autofreefem} and two-phase steady-state heat diffusion with an imperfect interface \cite{allaire2021shape}. The phase field methods have recently been  further developed to solve spectral shape and topology optimization \cite{garcke2023phase} and two-phase fluid flows \cite{jain2022accurate} with conservative discrete mass. 

Compared to other numerical approaches for two-phase problems, rearrangement methods (RM) have demonstrated notably fast convergence. Numerous numerical studies have reported that RM typically requires only a few iterations to reach solutions with accuracy comparable to that of the numerical solvers for the governing equations; see, \eg, \cite{chanillo2000symmetry,conca2012minimization,kang2020minimization,kang2017minimization,kao2008principal,kao2013efficient,mohammadi2019optimal}. Despite this excellent numerical performance, relatively few works have rigorously addressed the convergence properties and convergence rates of RM. Recently, a geometric rate of convergence was established for RM applied to a maximization problem involving Poisson equation with Dirichlet boundary conditions, through a detailed analysis of the PDE solution structure and iteration steps \cite{kao2021linear}. Moreover, the convergence of RM—referred to as thresholding algorithms in \cite{chambolle2025stability}—was proved in the context of three optimal control problems: minimization of the Dirichlet energy, optimization of weighted Dirichlet-Laplacian eigenvalues, and a non-energetic control problem with large volume constraints. 
This study extends the analysis of rearrangement-based methods by proposing and investigating an accelerated variant tailored to two-phase composite optimization problems. 

\subsection*{Formulation of the two-phase composite optimization problem.}
 Let $\Omega \subset \mathbb R^d$ be a bounded domain with smooth boundary. Fix constants $f_-, \ f_+, \ \overline{f}$ such that $0 < f_- < \overline{f} < f_+ $. 
Define the \emph{admissible set} $\mathcal{A} = \mathcal{A}(\Omega, f_-, f_+, \overline{f})$ by
$$
\mathcal{A} = \Big\{ f \in L^\infty(\Omega) \colon f = f_- + (f_+ - f_-) \chi_D, \, \, 
D \subset \Omega \, \, \textrm{measurable}, \, \,  
|\Omega|^{-1} \int_\Omega f = \overline{f}  \Big\}. 
$$
Note that $f \in \mathcal{A}$ requires $\overline{f} |\Omega| = \int_\Omega f = f_- |\Omega| + (f_+ - f_-) |D|$, implying $\frac{|D|}{|\Omega|} = \frac{\overline{f} - f_- }{ f_+ - f_-}  =: \delta$, a fixed constant, and so the measure of $D$ is fixed: $|D| = V := \delta |\Omega|$.

We consider a general two-phase composite material optimization problem of the form
\begin{align}
\label{e:gen2phase}
\max_{f \in \mathcal{A}}\:\: J(f) := \mathcal{F}(f, u),
\end{align}
where \( u \) denotes the solution to a PDE on \( \Omega \), with data given by $f$.  The functional 
$J$ is assumed to be convex and Fr\'echet differentiable; precise conditions on 
$J$ will be given in Section~\ref{s:PropARM}. 
Since we are maximizing a convex function and $\mathcal{A}$ is not a convex set, this general PDE-constrained optimization problem  \eqref{e:gen2phase} is generally nonconvex. 
Two  problems of this form are the extremal Poisson problem of maximizing the  (work) energy
\begin{subequations}
\label{e:PoissonOpt}    
\begin{align}
\max_{f \in \mathcal{A}} \ & J(f) = \frac{1}{2} \int_\Omega f(x) u(x) dx  &&  \\ 
\textrm{s.t.} \  & - \Delta u (x) = f(x)  && x\in \Omega \\
& u(x) = 0 && x\in \partial \Omega. 
\end{align}
\end{subequations}
and minimizing the principal eigenvalue of the $f$-weighted Dirichlet-Laplacian,
\begin{subequations}
\label{e:EigOpt}
\begin{align}
\min_{f \in \mathcal{A}} \ & J(f) = \lambda_1(f) \\ 
\textrm{s.t.} \ & - \Delta u(x) = \lambda f(x) u(x)  && x\in\Omega \\
& u(x) = 0 && x\in \partial \Omega.
\end{align}
\end{subequations}
More generally, $\Omega$ could be a Riemannian manifold with boundary and $u$ could be the solution to an elliptic operator (\eg, Laplace–Beltrami operator) on $\Omega$ with data given by $f$.

We can physically interpret \eqref{e:PoissonOpt} and \eqref{e:EigOpt} as follows. 
Consider a clamped membrane $\Omega$ with a two–phase \emph{body-force density} $f\in\{f_-,f_+\}$ and prescribed average $\bar f$ (equivalently, fixed total load $\int_\Omega f=\bar f|\Omega|$).
In \eqref{e:PoissonOpt}, maximizing $J(f)$ places the stronger load $f_+$ to maximize stored elastic energy (work).
In \eqref{e:EigOpt}, $f$ is a \emph{mass density}; minimizing $\lambda_1(f)$ distributes $f_+$ to lower the fundamental frequency under the same mass budget \cite{chanillo2000symmetry,myint2007linear}.

A rearrangement method (RM) for solving \eqref{e:gen2phase} is to start with an initial $f_0 \in \mathcal{A}$ and then for $k\geq 0$, set
\begin{subequations}
\label{e:RM}
\begin{align}
f_{k+1} 
& =
\arg\max_{f \in \mathcal{A}} \,\,  \left\langle J'(f_k), \ f \right\rangle_{L^2(\Omega)} \\
\label{e:RMb}
& = f_- + (f_+ - f_-) \chi_{D_{k+1}},
\qquad \textrm{where} \quad 
D_{k+1} = \left\{ x \in \Omega \colon J'(f_k)(x) \geq c_k \right\} .
\end{align}
\end{subequations}
Here, \( c_k \in \mathbb{R} \) is chosen so that \( |D_{k+1}| = V \).

Since $J$ is convex and Fr\'echet differentiable, we have
\[
J(f_{k+1})-J(f_k)
\;\ge\;
\left\langle J'(f_k),\, f_{k+1}-f_k \right\rangle_{L^2(\Omega)}.
\]
By \eqref{e:RM}, the right-hand side is nonnegative, and therefore
$J(f_{k+1})\ge J(f_k)$.
The formula \eqref{e:RMb} is valid—and the RM is well-defined—only if the level sets of \( J'(f_k) \) have zero measure, ensuring uniqueness (up to null sets) of the superlevel set. While this condition can be readily verified for particular  problems, e.g. \eqref{e:PoissonOpt}--\eqref{e:EigOpt} \cite{kao2021extremal,mohammadi2019optimal}, it is generally difficult to establish and is typically assumed in broader settings.

Under suitable assumptions, the rearrangement method \eqref{e:RM} is convergent \cite{chambolle2025stability} and it is generally observed in numerical experiments that the convergence is linear, \ie, there exists $L \in (0,1)$ such that 
$$
\| f_{k+1} - f^* \|_{L^2(\Omega)} \leq L \| f_k - f^* \|_{L^2(\Omega)} 
\quad \iff \quad 
|D_{k+1} \Delta D^* | \leq L |D_k \Delta D^* |, 
$$
where $f^* = f_- + (f_+ -f_-)\chi_{D^*}$ is the maximizer.
A linear convergence rate has been proven for \eqref{e:PoissonOpt} in one-dimension \cite{kao2021linear}. 

In the convex optimization problem $\min_{x\in \mathbb R^n} \, \eta(x)$ with $\eta\colon \mathbb R^n \to \mathbb R$ a convex function, it is well-known that the Nesterov's accelerated gradient method (AGM) yields improves convergence rates over gradient descent methods   \cite{nesterov1983method,RyuYin2023,su2016differential}. AGM starts with an initial point $y_0 \in \mathbb R^n$ and for $k\geq 0$ takes 
\begin{subequations}
\label{e:Nesterov}
\begin{align}
\label{e:NesterovA}
x_{k+1} &= y_k - s \nabla \eta(y_k), \\
\label{e:NesterovB}
y_{k} &= x_{k} + \theta_k \left( x_k - x_{k-1}\right),
\end{align}
\end{subequations}
where  \( s > 0 \) is a step size parameter and $\theta_k = \frac{k}{k + 3} \in [0,1)$ is an extrapolation parameter satisfying $\theta_k = 1 - O(1/k)$. 
Iterates $\{x_k\}$ of AGM applied to a convex, $L$-smooth function $\eta$ converge to a minimizer $x^*$ with rate 
$\eta(x_k) - \eta(x^*) \leq O \left(   \frac{ \|x_0 - x^*\|^2}{s k^2} \right)$ \cite{RyuYin2023,su2016differential}. That is, AGM has sublinear convergence and is an improvement over the linear $O(1/k)$ convergence rate for gradient descent.

\emph{In this paper, inspired by the speedup seen in the AGM \eqref{e:Nesterov}, we seek to find an accelerated rearrangement method (ARM).} 
Nesterov's acceleration cannot straightforwardly be applied for several reasons:
(i) Nesterov-type acceleration methods have been generalized for convex  PDE-constrained optimization problems (see, \eg, \cite{clason2017primal,nesterov2018lectures});  our PDE-constrained optimization problem is non-convex. 
(ii) Our optimization problem \eqref{e:gen2phase} has constraints, so iterates of an extrapolation step  of the form in \eqref{e:NesterovB}, \ie, $f_{k} + \theta_k ( f_k - f_{k-1})$ may not be in $\mathcal{A}$. 
In particular, the updated $f_k$ might be negative on a subset of $\Omega$ and hence the PDE and objective functional
may no longer be well-defined in the intended sense.
Because the problem is not convex and constrained, variants of AGM such as FISTA \cite{BeckTeboulle2009} or iPIANO \cite{OchsChenBroxPock2014} do not directly apply.

We propose the following \emph{accelerated rearrangement method (ARM)}, where we start with an initial $f_0 \in \mathcal{A}$ and for $k\geq 0$, set 
\begin{subequations}
\label{e:ARM}
\begin{align}
\label{e:ARMa}
f_{k+1} &= \arg\max_{f \in \mathcal{A}} \,\,  \left\langle g_k, \ f \right\rangle_{L^2(\Omega)}, 
\quad \textrm{where}  \quad 
g_k = J'(f_k)
\;+\; \theta_{k}\left( J'(f_k) - J'(f_{k-1})\right)  \\
&= f_- + (f_+ - f_-) \chi_{D_{k+1}}, 
\quad \textrm{where} \quad 
D_{k+1} = \left\{ x \in \Omega \colon g_k(x) \geq c_k \right\} .
\end{align}
\end{subequations}
Here, \( c_k \in \mathbb{R} \) is again chosen so that \( |D_{k+1}| = V \),
and $\theta_k = \frac{k}{k+3} \in [0,1)$  so that we accelerate by ``extrapolating'' on the current and previous Fr\'echet derivatives yielding a function, $g_k$, used for thresholding. The updated iterate $f_{k+1}$ (maximizer in \eqref{e:ARMa}) is a uniquely defined (up to null sets) \emph{admissible} function provided that the level sets of $g_k$ have zero measure—a condition that ensures well-posedness of the thresholding step and is typically assumed in practice. If the level sets of \( g_k \) have positive measure, the algorithm can still be effectively used; see Remark~\ref{rem:Nonuniqness_f_k+1} for further discussion.

We also consider a \emph{restarted accelerated rearrangement method (RARM)}, where, for $ k \geq 0$,  we define 
\begin{subequations}
\label{e:RARM}
\begin{align}
\tilde{f}_{k+1} &= \arg\max_{f \in \mathcal{A}} \,\,  \left\langle g_k, \ f \right\rangle_{L^2(\Omega)}, 
\quad \textrm{where}  \quad 
g_k = J'(f_k)
\;+\; \theta_{k}'\left( J'(f_k) - J'(f_{k-1})\right), \\
&= f_- + (f_+ - f_-) \chi_{D_{k+1}}, 
\quad \textrm{where} \quad 
D_{k+1} = \left\{ x \in \Omega \colon g_k(x) \geq c_k \right\}. 
\end{align}
\end{subequations}
Here, $\theta_k' = \frac{k - k_0}{k - k_0 +3}$ with $k_0 = 0$, initially. 
Then, if $J(\tilde{f}_{k+1}) > J(f_{k})$, 
we take $f_{k+1} = \tilde{f}_{k+1}$. Otherwise,  we update $k_0 = k$ in $\theta_k'$, so that $\theta_k' = 0$ and take the RM update 
$f_{k+1} = \arg\max_{f \in \mathcal{A}} \,\,  \left\langle J'(f_k), \ f \right\rangle_{L^2(\Omega)}$. 
RARM has the property that the objective value at each iterate is non-decreasing, as the RM method is non-decreasing.

\subsection*{Summary of results and outline.}
In \cref{s:PropARM}, we establish some theoretical properties of the accelerated rearrangement method (ARM) \eqref{e:ARM}.
In \cref{s:PropARMa}, we first prove that RARM has the property that, under mild assumptions, every weak-$*$ accumulation point of its iterates lies in the admissible class and satisfies the first‐order necessary condition for optimality; see \cref{t:Convergence}.
In \cref{s:PropARMc} we  introduce a modified ARM for the simple one-dimensional extremal Poisson problem \eqref{e:PoissonOpt}. 
Write the iterates of the modified ARM as 
$f_k(x)=f_{-} + (f_{+}-f_{-})\,\chi_{[\,y_k-\delta,\;y_k+\delta\,]}(x)$.
Linearizing the 1D threshold‐map, $h$, about its fixed point reveals a contractive factor $L<1$, and by optimally choosing the extrapolation weight $\theta^*$ one can collapse the characteristic equation onto a double root $r^*<L$.  Theorem \ref{t:ConvRate1d} then proves that the modified ARM attains an improved asymptotic rate
\[
|y_{k+1}|\;\lesssim\;r^*\,|y_k|,
\qquad \qquad 
r^*=1-\sqrt{1-L}<L,
\]
so that the width‐parameter $y_k\to0$ and hence $f_k \to f^*$ at a faster geometric rate as compared to (unaccelerated) RM.  
Numerical experiments confirm these rate gains in both low- and high-contrast regimes.

\begin{figure}[p!]
\begin{center}
\includegraphics[width=0.45\textwidth]{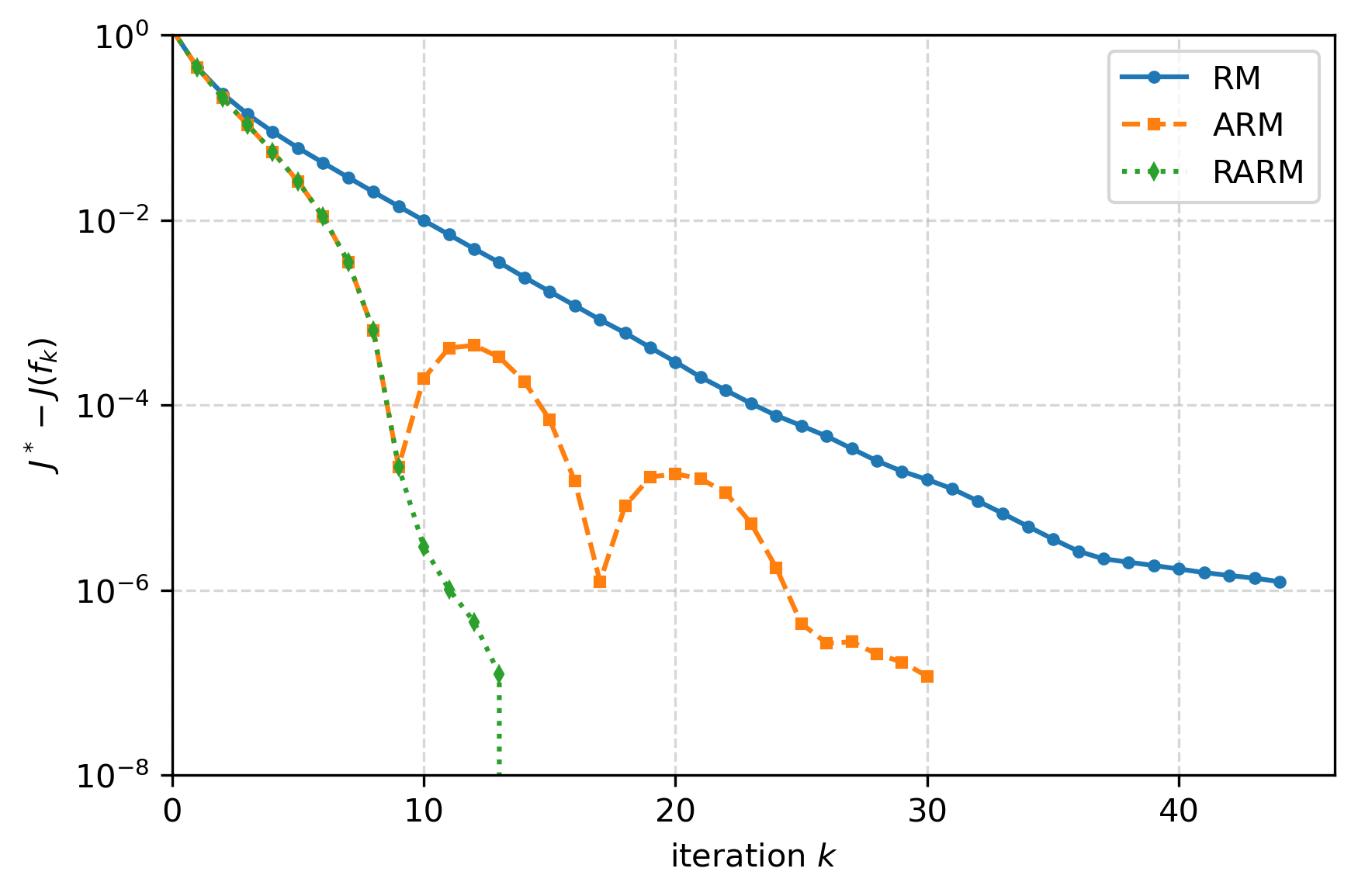}
\includegraphics[width=0.45\textwidth]{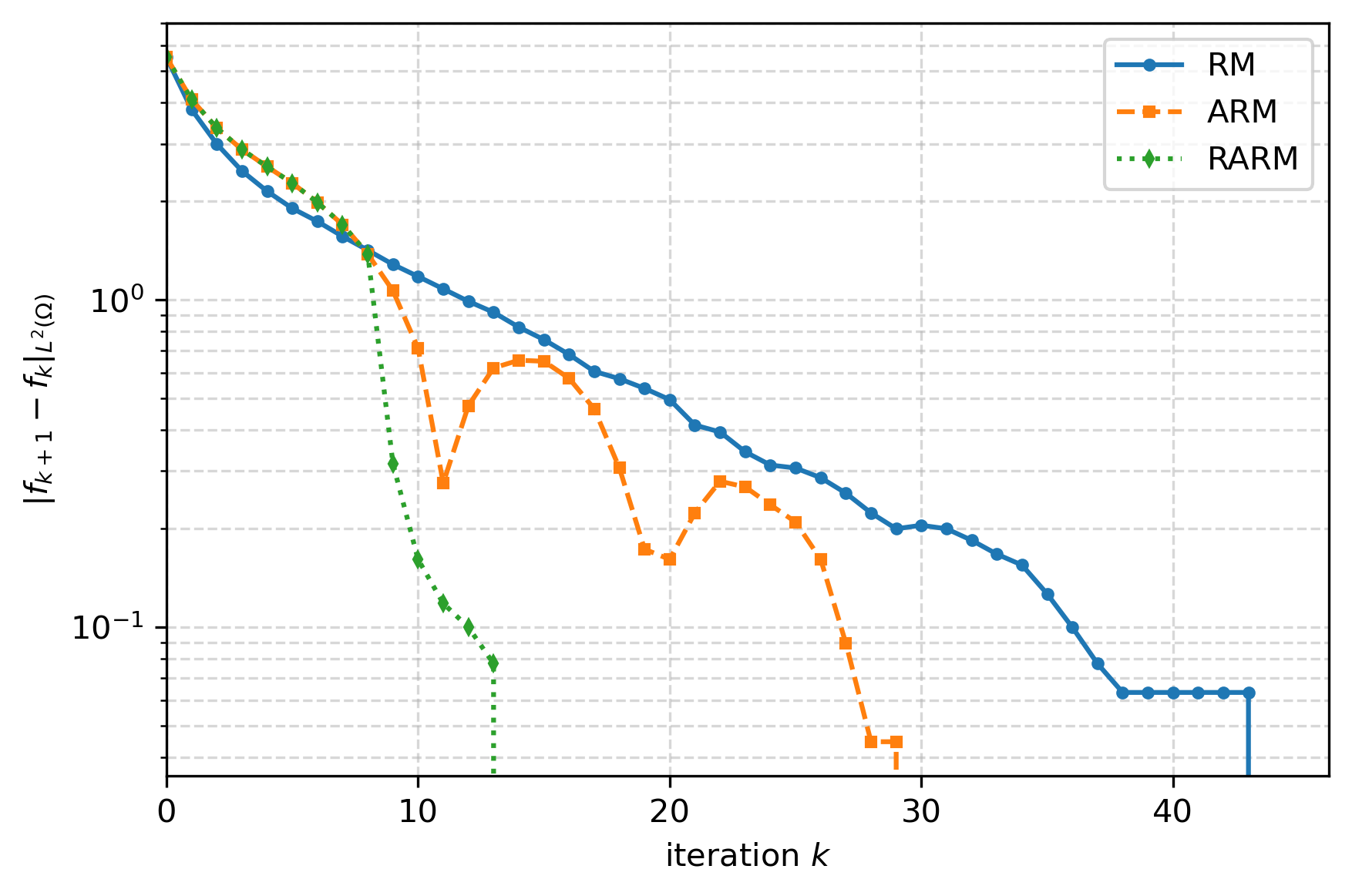} \\ 
\medskip
\includegraphics[width=0.45\textwidth,trim={7cm 0 0 4cm},clip]{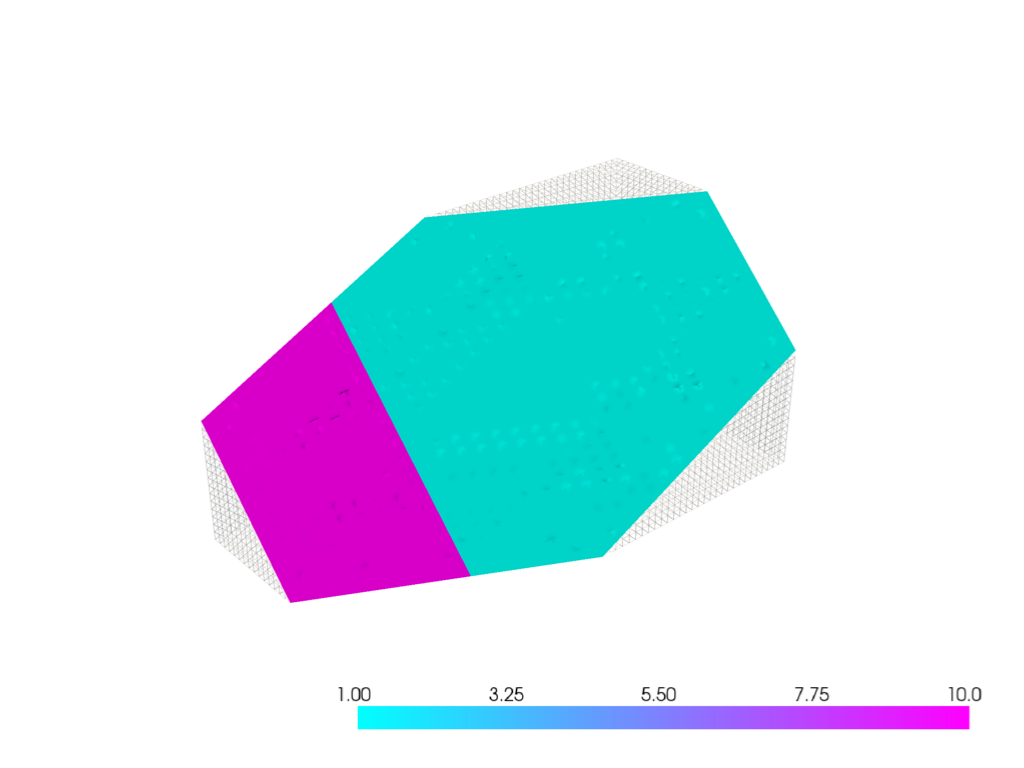}
\includegraphics[width=0.45\textwidth,trim={7cm 0 0 4cm},clip]{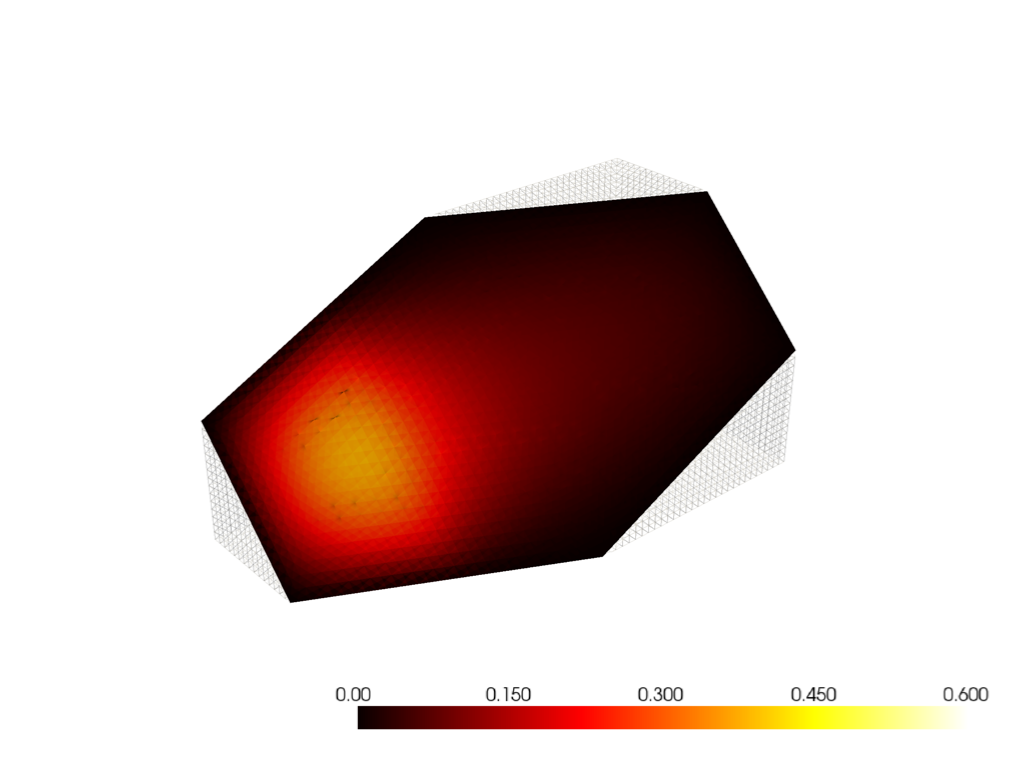} \\ 
\medskip
\includegraphics[width=0.45\textwidth,trim={7cm 0 0 4cm},clip]{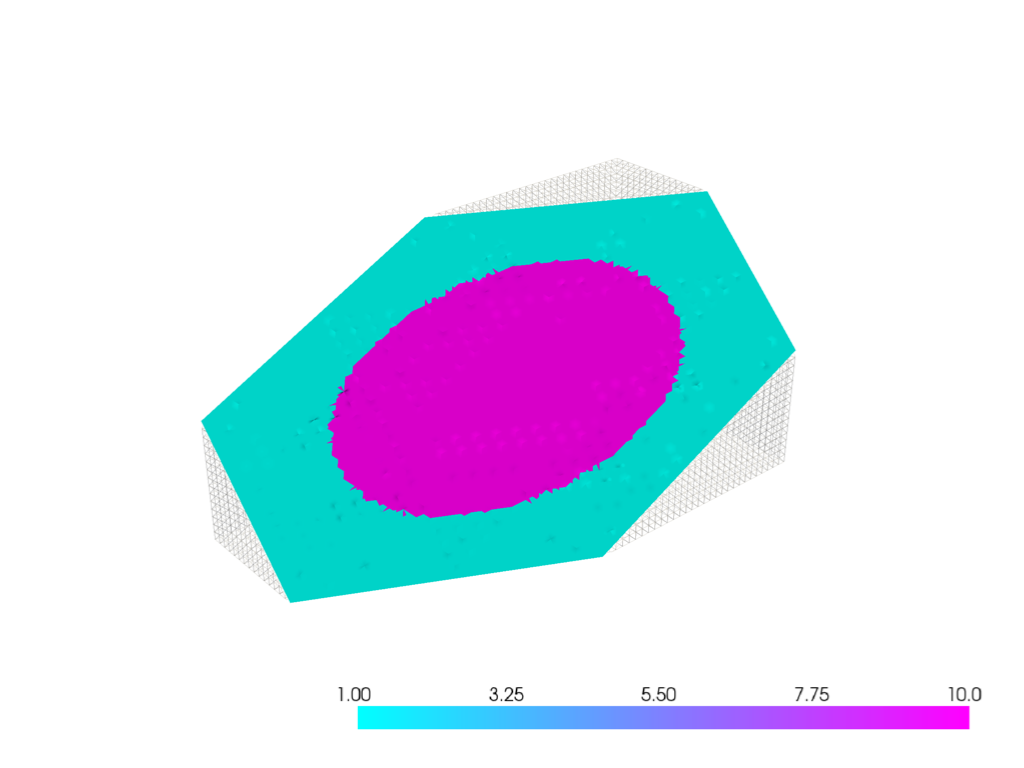}
\includegraphics[width=0.45\textwidth,trim={7cm 0 0 4cm},clip]{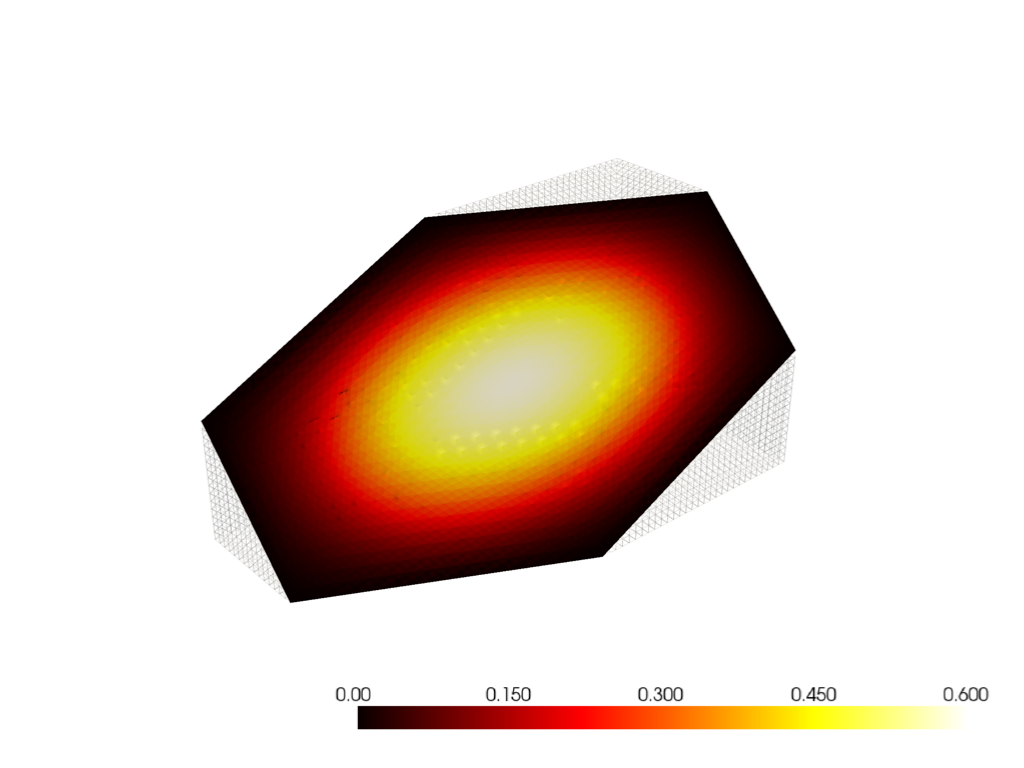} 
\end{center}
\caption{We consider the extremal Poisson  problem \eqref{e:PoissonOpt} on the cuboid $\Omega = [0,2]\times[0,1]\times [0,1]$, with $f_- = 1$, $f_+ = 10$, and $\delta = 0.28$. 
We plot the iteration $k$ vs. $J^* - J(f_k)$ {\bf (top left)} and $\|f_{k+1} - f_{k}\|_{L^2(\Omega)}${\bf (top right)} for the 
rearrangement method (RM) \eqref{e:RM},  
accelerated RM (ARM)\eqref{e:ARM}, and 
restarted ARM (RARM) \eqref{e:RARM}.  
In the middle panels, we plot the initial density $f_0$ {\bf (middle left)} and initial solution  $u_0$ {\bf (middle right)}. The density and solution are plotted on a ``diagonal slice'' though the cuboid mesh. 
In the lower panels, we plot the final density $f^*$ {\bf (bottom left)} and final solution  $u^*$ {\bf (bottom right)}. 
See \cref{s:CompCuboid} for details. }
    \label{fig:Cuboid}
\end{figure}

In \cref{s:CompRes}, we present results of several computational experiments demonstrating and comparing the RM \eqref{e:RM}, ARM \eqref{e:ARM}, and RARM \eqref{e:RARM} on the extremal Poisson problem \eqref{e:PoissonOpt} and extremal eigenvalue problem \eqref{e:EigOpt}. 
Generally, we observe that the RM converges monotonically, but at a slower rate than the ARM. The ARM exhibits the classical ``bouncing'' behavior of an accelerated method. The RARM converges monotonically and because the ``bouncing'' is eliminated, converges in fewer iterations than the ARM. 
For a preview of the typical performance of these algorithms, we can consider the extremal Poisson problem \eqref{e:PoissonOpt} on the three-dimensional cuboid $\Omega = [0,2]\times [0,1] \times [0,1]$, as further discussed in \cref{s:CompCuboid}. The results are displayed in \cref{fig:Cuboid}.
Indeed, in this example, the accelerated methods converge in fewer iterations than the original rearrangement method.

In summary, our theoretical and numerical results show that ARM converges geometrically (exponentially) with a better rate than the geometrically convergent RM. We remark that this speedup is in contrast with that for the Nesterov's accelerated gradient method (AGM) \eqref{e:Nesterov}, which gives a sublinear $O(1/k^2)$ convergence rate, as compared to linear $O(1/k)$ for the gradient method. We conclude in \cref{s:Disc} with further discussion.


\section{Properties of the accelerated rearrangement method}
\label{s:PropARM}
We first briefly discuss the existence of a solution for \eqref{e:gen2phase}.
Define the set
$$
\mathcal{B} :=  \overline{\textrm{co}}(\mathcal A) =\Big\{ f\in L^\infty(\Omega)\colon f_- \le f(x)\le f_+, \ \tfrac{1}{|\Omega|}\int_\Omega f=\overline{f} \Big\}.
$$
Throughout this section, we assume the objective functional  $J\colon \mathcal B \subset L^2(\Omega)\to\mathbb R$ is convex, Fr\'echet differentiable, and
\emph{sequentially weak-$*$ continuous on $\mathcal B$}, that is,
whenever $f_n \overset{*}{\rightharpoonup} f$ in $L^\infty(\Omega)$ with $f_n,f\in\mathcal B$,
we have $J(f_n)\to J(f)$. 
It is well-known that $\mathcal{B}$ is convex, compact for the weak-$*$ convergence and
its extremal points are exactly the bang-bang functions of the form  
$f = f_- + (f_+ - f_-) \chi_D$, $|D| = V$,
 \ie, all admissible functions in $\mathcal{A}$  \cite{friedland1977extremal,Henrot_2006}.
Considering the maximization over the relaxed admissible set $\mathcal{B}$, \ie, 
$\max_{f \in \mathcal{B}} \, J(f)$,
the weak-$*$ compactness of $\mathcal{B}$   and weak-$*$ continuity of $J$  ensure the existence of a maximizer. 
Furthermore, since $J$ is convex in $f$, its maximizer over the convex set $\mathcal{B}$ 
must lie at an extreme point of $\mathcal{B}$, which is necessarily a bang–bang function in $\mathcal{A}$. 
Hence, problem~\eqref{e:gen2phase} admits a solution.

\subsection{Stationarity of accumulation points under restarted updates}
\label{s:PropARMa}

In this subsection, we study the stationarity properties of RARM and show that weak-$*$ accumulation points generated at restart iterations satisfy the first-order necessary optimality condition.


\begin{thm}[Stationarity of accumulation points for RARM]
\label{t:Convergence}
In \eqref{e:gen2phase}, assume that $J\colon \mathcal B \subset L^2(\Omega)\to\mathbb R$ is convex, Fr\'echet differentiable, and
\emph{sequentially weak-$*$ continuous on $\mathcal B$}.
In addition, suppose that
\[
f_n \overset{*}{\rightharpoonup} f \text{ in } L^\infty(\Omega)
\quad \Longrightarrow \quad
J'(f_n)\to J'(f) \text{ in } L^1(\Omega).
\]
Consider any sequence $\{f_k\}\subset\mathcal A$ generated by RARM \eqref{e:RARM} such that
there exists an infinite set of indices $\mathcal K\subset\mathbb N$ with the property that
for every $l\in\mathcal K$, $\theta^\prime_{l}=0$, i.e., $f_{l+1}$ is determined using
the restarted (RM) update. Then:\\
\smallskip
 \noindent
(i) Every weak-$*$ accumulation point $\hat f$ of the subsequence $\{f_{l}\}_{l\in\mathcal K}$
belongs to $\mathcal B$ and satisfies the first-order necessary optimality condition
\begin{equation*}
\langle J'(\hat f),\, f-\hat f\rangle \le 0 \qquad \forall\, f\in\mathcal B.
\end{equation*}
\smallskip
\noindent
(ii) If the level sets of $\hat g:=J'(\hat f)$ have zero Lebesgue measure, then
$\hat f\in\mathcal A$ and $\hat f=f_-+(f_+-f_-)\chi_{\hat D}$ with
\[
\hat D=\{x\in\Omega\colon \hat g(x)\ge c\},\qquad
c=\sup\big\{s\in\mathbb R\colon \ |\{x\colon \hat g(x)\ge s\}|\ge V\big\}.
\]
\end{thm}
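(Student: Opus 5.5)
The argument rests on two facts: RARM produces a monotone, bounded sequence of objective values, and at each restart index $l\in\mathcal K$ the next iterate $f_{l+1}$ is an exact RM step from $f_l$.

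\textbf{Monotonicity and the vanishing gap.} By construction of RARM, either $J(f_{k+1})=J(\tilde f_{k+1})>J(f_k)$, or $f_{k+1}$ is the RM update. In the second case convexity gives $J(f_{k+1})-J(f_k)\ge\langle J'(f_k),f_{k+1}-f_k\rangle\ge 0$. So $\{J(f_k)\}$ is nondecreasing. It is bounded above by $\max_{\mathcal B}J$, which exists by the weak-$*$ compactness and continuity discussed before the theorem. Hence $J(f_k)\to J^\star$, and in particular $J(f_{l+1})-J(f_l)\to 0$ along $l\in\mathcal K$. For $l\in\mathcal K$, $f_{l+1}$ maximizes $\langle J'(f_l),\cdot\rangle$ over $\mathcal A$. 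Since $\mathcal B$ is the weak-$*$ closed convex hull of $\mathcal A$ and $J'(f_l)\in L^1$, a linear functional has the same supremum over $\mathcal A$ as over $\mathcal B$. Therefore
\[
0\le \sup_{f\in\mathcal B}\langle J'(f_l),f-f_l\rangle=\langle J'(f_l),f_{l+1}-f_l\rangle\le J(f_{l+1})-J(f_l)\to 0 .
\]

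\textbf{Passing to the limit (part (i)).} Let $f_l\overset{*}{\rightharpoonup}\hat f$ along a further subsequence of $\mathcal K$. Then $\hat f\in\mathcal B$, because $\mathcal A\subset\mathcal B$ and $\mathcal B$ is weak-$*$ closed. Fix $f\in\mathcal B$ and write
\[
\langle J'(f_l),f-f_l\rangle-\langle J'(\hat f),f-\hat f\rangle=\langle J'(f_l)-J'(\hat f),f-f_l\rangle+\langle J'(\hat f),\hat f-f_l\rangle .
\]
The first term is bounded by $\|J'(f_l)-J'(\hat f)\|_{L^1}\cdot 2f_+\to0$ by the $L^1$-continuity hypothesis. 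The second term tends to $0$ by weak-$*$ convergence, since $J'(\hat f)\in L^1$. Combined with the previous display, this gives $\langle J'(\hat f),f-\hat f\rangle\le 0$ for all $f\in\mathcal B$. The main care is needed exactly here: the product of a weakly converging factor and a gradient factor cannot be handled by weak convergence alone, and the splitting above with the strong $L^1$ convergence of $J'(f_l)$ is what makes it work. One should also check that the admissibility of $f_{l+1}$ is only used through the sup identity, so nonuniqueness of $f_{l+1}$ causes no trouble.

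\textbf{Part (ii).} Part (i) says that $\hat f$ maximizes the linear functional $\langle\hat g,\cdot\rangle$ over $\mathcal B$. This is a bathtub problem. The threshold $c$ is chosen so that $|\{\hat g\ge c\}|=V$; this is possible because $s\mapsto|\{\hat g\ge s\}|$ is continuous when level sets are null. The bang-bang function $f^\sharp=f_-+(f_+-f_-)\chi_{\hat D}$ is then a maximizer. For any $f\in\mathcal B$, the pointwise inequality $(\hat g-c)(f^\sharp-f)\ge0$ holds, and $\int(f^\sharp-f)=0$, so
\[
\langle\hat g,f^\sharp-f\rangle=\int(\hat g-c)(f^\sharp-f)\ge0 .
\]
Equality for $f=\hat f$ forces $(\hat g-c)(f^\sharp-\hat f)=0$ a.e. Since $\{\hat g=c\}$ is null, $\hat f=f^\sharp$ a.e. Hence $\hat f\in\mathcal A$ with the stated $\hat D$.
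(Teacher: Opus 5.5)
Your proof is correct, but part (i) takes a different route from the paper's.

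\textbf{The paper's route.} The paper extracts a second subsequence $f_{l+1}\overset{*}{\rightharpoonup}\bar f$. It passes to the limit in the optimality of the RM step, which shows that $\bar f$ maximizes $\langle J'(\hat f),\cdot\rangle$ over $\mathcal B$. It then argues by contradiction: if $\langle J'(\hat f),\bar f-\hat f\rangle>0$, convexity at $\hat f$ gives $J(\bar f)>J(\hat f)$. But weak-$*$ continuity of $J$ together with monotonicity forces $J(\hat f)=J(\bar f)=J^\star$.

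\textbf{Your route.} You apply convexity at the iterates $f_l$ themselves and use $\sup_{\mathcal A}=\sup_{\mathcal B}$ for the linear functional. This bounds the Frank--Wolfe-type stationarity gap $\sup_{f\in\mathcal B}\langle J'(f_l),f-f_l\rangle$ by the increment $J(f_{l+1})-J(f_l)$. After that only one subsequence is needed, and the limit passage is your explicit $L^1$/weak-$*$ splitting.

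\textbf{Comparison.} Your route is more direct and quantitative.
\begin{itemize}
\item Weak-$*$ continuity of $J$ enters only through the upper bound on $J$.
\item The gap tends to zero along all of $\mathcal K$, not just along the convergent subsequence.
\item Since all increments are nonnegative, telescoping gives $\sum_{l\in\mathcal K}\sup_{f\in\mathcal B}\langle J'(f_l),f-f_l\rangle\le J^\star-J(f_0)$. This is a rate for the best gap over the first $N$ restart iterations.
\end{itemize}
The paper's argument stays at the level of limits. It uses convexity only at $\hat f$, but in exchange leans on the weak-$*$ continuity of $J$ at the accumulation points. Its extra conclusion, that $\bar f$ also maximizes $\langle J'(\hat f),\cdot\rangle$, follows from your estimate too: pass to the limit in $\langle J'(f_l),f_{l+1}-f_l\rangle\to 0$.

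\textbf{Part (ii).} The paper cites Burton's uniqueness lemma and the bathtub principle. You prove both directly from the pointwise inequality $(\hat g-c)(f^\sharp-f)\ge 0$ and the equality of means. This is self-contained and shows exactly where the null-level-set hypothesis enters. One detail to add: the $c$ defined as a supremum in the statement does satisfy $|\{\hat g\ge c\}|=V$. This follows because $s\mapsto|\{\hat g\ge s\}|$ is always left-continuous, and is right-continuous when the level sets are null.
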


\begin{proof}
(i) Let $\hat f$ be a weak-$*$ accumulation point of $\{f_{l}\}_{l\in\mathcal K}$.
Passing to a subsequence (still indexed by $l\in\mathcal K$), we may assume
$f_{l}\overset{*}{\rightharpoonup}\hat f$ in $L^\infty(\Omega)$.
Since $f_{l}\in\mathcal A\subset\mathcal B$ and $\mathcal B$ is weak-$*$ compact,
we have $\hat f\in\mathcal B$. This implies that we have
$J'(f_l)\to J'(\hat f)$ in $L^1(\Omega)$ in view of the assumptions of the theorem.

Fix $l\in\mathcal K$. By \eqref{e:RARM},
\[
\int_\Omega f_{l+1}\,J'(f_l)\,dx \ge \int_\Omega f\,J'(f_l)\,dx
\qquad \forall\, f\in\mathcal A.
\]
In view of the compactness of $\mathcal B$, there is $\bar f \in \mathcal{B}$ such that up to a subsequence
$f_{l+1} \overset{*}{\rightharpoonup}\bar f$.
Passing to the limit in the inequality above and using the convergences above and the compactness of $\mathcal{B}$, we obtain
\begin{equation}\label{ineq:barf}
    \int_\Omega \bar f \, J'(\hat f) \, dx 
\, \geq \, 
\int_\Omega f \, J'(\hat f) \, dx, \quad \text{for all } f \in \mathcal{B}.
\end{equation}
We claim that  $\int_\Omega \bar f \, J'(\hat f) \, dx 
\, = \, 
\int_\Omega  \hat f \, J'(\hat f) \, dx$ and so \eqref{ineq:barf} yields 
\[
\langle J'(\hat f),\, f-\hat f\rangle \le 0 \qquad \forall\, f\in\mathcal B.
\]

To prove the claim, we assume that 
$\int_\Omega \bar f \, J'(\hat f) \, dx 
\, > \, 
\int_\Omega  \hat f \, J'(\hat f) \, dx$. Since $J$ is convex and Fr\'echet differentiable, we have

\begin{equation}\label{ineq:J(bf)>J(h f)}
 J(\bar f) -J(\hat f) \geq    \langle J'(\hat f),\, \bar f-\hat f\rangle >0.
\end{equation}

  By construction of RARM, the objective values satisfy $J(f_{k+1}) \geq J(f_k)$  for all $k$.
The sequence $\{J(f_k)\}$  is also bounded from above due to existence of a maximizer for \eqref{e:gen2phase}. Therefore, we have $J(f_k) \to J^*$. Moreover,  sequences $J(f_l)$ and $J(f_{l+1})$ should also converge to $J^*$ due to weak-$*$ continuity of $J$. This means $J(\hat f) = J(\bar f) = J^*$ which contradicts  \eqref{ineq:J(bf)>J(h f)}.

(ii) From the proof of (i), \( \hat{f} \) is the maximizer of the linear functional
$L(f):=\int_{\Omega} f \, \hat g\, dx$,
over all \( f \in \mathcal{B} \). {In fact}, \( \hat{f} \) is the unique maximizer up to null sets, since the level sets of \( \hat g \) have measure zero \cite[Lemma 2.4]{burton1989variational}.
Therefore, the final assertion follows from the bathtub principle; see \cite{lieb2001analysis} and \cite[Lemma 2.2]{mohammadi2023maximum}.
\end{proof}

\begin{rem}\label{rem:Nonuniqness_f_k+1}
In ARM or RARM, one solves the maximization problem \eqref{e:ARMa} using thresholding. In general, a maximizer of \( \int_\Omega f g_k \) over \( \mathcal{A} \) is given by \( f_{k+1} = f_- + (f_+ - f_-)\chi_{D_{k+1}} \), where 
\begin{equation}\label{e:dk+1asasubset}
 \{x \colon  g_k(x) > c_k\} \subset D_{k+1} \subset \{x\colon g_k(x) \geq c_k\},\:\:c_k = \sup\{c \in \mathbb{R}\colon|\{x \in \Omega \colon g_k(x) \geq c\}| \geq V\},   
\end{equation}
see, \eg, \cite{lieb2001analysis} and \cite[Lemma 5]{kao2022maximal}. It is clear that if the level sets of \( g_k \) have measure zero, then \( D_{k+1} \) is uniquely determined, \ie, \( D_{k+1} = \{x \in \Omega \colon g_k(x) \geq c_k\} \).
Showing that the level sets of \( g_k \) have measure zero is generally nontrivial, but flat regions are typically not observed in numerical practice due to discretization. This justifies assuming the condition in practice. Note that even if the level set \( \{x \in \Omega \colon g_k(x) = c_k\} \) has positive measure—resulting in non-uniqueness of \( D_{k+1} \)—the algorithm still functions correctly. In this case, any admissible choice of \( D_{k+1} \) satisfying \eqref{e:dk+1asasubset} can be used. The conclusions of Theorem~\ref{t:Convergence}  for RARM remain valid under this relaxed condition.
\end{rem}

\begin{rem}
   If $\{f_k\}$ is a sequence generated by ARM such that 
\( f_k \overset{*}{\rightharpoonup} \hat{f} \) in \( L^\infty(\Omega) \), and $\hat{f} \in \mathcal{A}$, 
then $f_k \to \hat{f}$ in $L^2(\Omega)$ by the Radon–Riesz theorem, since all functions in $\mathcal{A}$ 
have the same $L^2$-norm.
\end{rem}
\begin{rem}
\label{rem:RARM_restart}
The assumption that the restarted update (i.e., $\theta'_l=0$) occurs infinitely often
is mild and consistent with practical performance of the algorithm.
Indeed, in our numerical experiments, we observe that the extrapolated step in RARM is accepted for a small number of consecutive iterations, after which a restart is triggered to recover monotonicity of the objective.
Assuming that this behavior persists, the assumption required in Theorem~\ref{t:Convergence} holds. 
\end{rem}

To conclude this subsection, we show that our example problems \eqref{e:PoissonOpt}–\eqref{e:EigOpt} satisfy the assumptions of Theorem \ref{t:Convergence}. The objective functionals in both the extremal Poisson problem \eqref{e:PoissonOpt} and the eigenvalue problem \eqref{e:EigOpt} are  sequentially weak-$*$ continuous and
Fr\'echet differentiable; see, \eg, \cite{chambolle2025stability,Henrot_2006,troltzsch2010optimal}.
It is not difficult to compute that 
$J'(f_k) = u_k$ 
in \eqref{e:PoissonOpt} and 
$J'(f_k) = - \lambda u_k^2$ 
in \eqref{e:EigOpt}
(assuming that the eigenfunction has been normalized so that $\int_\Omega f_k u_k^2 = 1$).
It is easy to check that the objective functional in \eqref{e:PoissonOpt} is convex. Moreover, $1/\lambda_1(f)$ is convex in \eqref{e:EigOpt}, \cite[Theorem 9.1.3]{Henrot_2006}.
We  show that  for any sequence $\{f_k\}\subset \mathcal{A}$, such that \( f_k \overset{*}{\rightharpoonup} \hat{f} \) in \( L^\infty(\Omega) \), we have
\[
J'(f_k) \to J'(\hat f) \quad \text{in } L^1(\Omega).
\]
We provide a detailed proof for the eigenvalue problem \eqref{e:EigOpt}; the corresponding result for \eqref{e:PoissonOpt} follows by a similar argument and is omitted for brevity.

\begin{lem}\label{l: frechetdiffeigopt}
Consider the functional \( J(f) \) corresponding to problem \eqref{e:EigOpt}. If \( \{f_k\} \subset \mathcal{A} \) and \( f_k \overset{*}{\rightharpoonup} \hat{f} \) in \( L^\infty(\Omega) \), then
$J'(f_k) \to J'(\hat f)$ in $L^1(\Omega)$.

\end{lem}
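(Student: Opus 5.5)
The strategy is to show that $\lambda_k:=\lambda_1(f_k)\to\hat\lambda:=\lambda_1(\hat f)$ and that the normalized eigenfunctions satisfy $u_k\to\hat u$ strongly in $L^2(\Omega)$, with bounds in $H^1_0(\Omega)$. The claim then follows from
\[
\|\lambda_k u_k^2-\hat\lambda\hat u^2\|_{L^1}\le|\lambda_k-\hat\lambda|\,\|u_k\|_{L^2}^2+\hat\lambda\,\|u_k-\hat u\|_{L^2}\|u_k+\hat u\|_{L^2}.
\]
Throughout, we normalize by $\int_\Omega f_k u_k^2=1$ and $u_k>0$, which is possible since the principal eigenvalue is simple and its eigenfunction has one sign. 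Note that $\hat f\in\mathcal B$, so $f_-\le\hat f\le f_+$, and $\lambda_1(\hat f)$ is well defined by the Rayleigh quotient.

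\textbf{Step 1: uniform bounds.} By the Rayleigh characterization $\lambda_1(f)=\min_{v\in H^1_0}\int|\nabla v|^2/\int f v^2$, we have $\lambda_1(f_+)\le\lambda_k\le\lambda_1(f_-)$, writing $\lambda_1(c)$ for constant weights. The normalization gives $\|u_k\|_{L^2}^2\le 1/f_-$. It also gives $\|\nabla u_k\|_{L^2}^2=\lambda_k\le\lambda_1(f_-)$, so $\{u_k\}$ is bounded in $H^1_0$. By Rellich, along a subsequence $u_k\rightharpoonup u$ in $H^1_0$, $u_k\to u$ in $L^2$, and $\lambda_k\to\mu$.

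\textbf{Step 2: passing to the limit.} This is the main point, since it pairs a weak-$*$ convergent weight with the eigenfunctions. Because $u_k\to u$ strongly in $L^2$, we have $u_k^2\to u^2$ in $L^1$. Since $f_k\overset{*}{\rightharpoonup}\hat f$ in $L^\infty=(L^1)^*$, it follows that $\int f_k u_k^2\to\int\hat f u^2$, so $\int\hat f u^2=1$ and in particular $u\ne0$. The same argument with $\varphi\in H^1_0$ applies to the weak form $\int\nabla u_k\cdot\nabla\varphi=\lambda_k\int f_k u_k\varphi$: here $u_k\varphi\to u\varphi$ in $L^1$, which yields $-\Delta u=\mu\hat f u$. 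Also $u\ge0$ as an $L^2$ limit of positive functions.

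\textbf{Step 3: identifying the limit.} We need $\mu=\hat\lambda$, and we show $\mu\le\hat\lambda$ and $\mu\ge\hat\lambda$. For $\mu\le\hat\lambda$, take the limsup of $\lambda_k\le\int|\nabla\hat u|^2/\int f_k\hat u^2$; the denominator converges to $\int\hat f\hat u^2$ by weak-$*$ convergence. For $\mu\ge\hat\lambda$, use weak lower semicontinuity: $\hat\lambda\le\int|\nabla u|^2\le\liminf\int|\nabla u_k|^2=\mu$, using $\int\hat fu^2=1$. So $\mu=\hat\lambda$, and $u$ is a nonnegative principal eigenfunction with the same normalization. By simplicity, $u=\hat u$. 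Since the limit is unique, the whole sequence converges: $u_k\to\hat u$ in $L^2$ and $\lambda_k\to\hat\lambda$.

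\textbf{Step 4: conclusion.} Insert these convergences into the $L^1$ estimate above; its right-hand side tends to zero. This gives $J'(f_k)=-\lambda_k u_k^2\to-\hat\lambda\hat u^2=J'(\hat f)$ in $L^1(\Omega)$.
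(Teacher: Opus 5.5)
Your proof is correct, and its skeleton matches the paper's. Both use the uniform $H^1_0$ bound coming from the normalization $\int_\Omega f_k u_k^2=1$, Rellich compactness, passage to the limit in the weak form by pairing weak-$*$ convergence of $f_k$ against strong $L^1$ convergence, identification of the limit, and the final $L^1$ estimate.

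The one genuine difference is in Step 3, where the limit is identified.
\begin{itemize}
\item The paper observes that $u\ge 0$ is a nontrivial eigenfunction for $\hat f$. It then invokes the fact that the principal eigenfunction is the only eigenfunction that does not change sign, which gives $\lambda=\lambda_1(\hat f)$ and $u=u_{\hat f}$ at once.
\item You instead prove $\mu=\lambda_1(\hat f)$ directly through the variational sandwich $\limsup_k\lambda_k\le\lambda_1(\hat f)\le\liminf_k\lambda_k$. The upper bound comes from testing the Rayleigh quotient of $f_k$ with $\hat u$. The lower bound comes from weak lower semicontinuity of the Dirichlet energy together with $\int_\Omega \hat f u^2=1$. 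You then need only simplicity and $u\ge 0$ to conclude $u=\hat u$.
\end{itemize}

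Your route is more self-contained: it relies only on the Rayleigh characterization and simplicity of the principal eigenvalue. As a by-product it also proves $\lambda_1(f_k)\to\lambda_1(\hat f)$, which is the sequential weak-$*$ continuity of $J$ that the paper takes from the literature. The paper's route is shorter but depends on the sign characterization of the principal eigenfunction.

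You also make explicit two points the paper leaves implicit. First, uniqueness of the limit upgrades subsequential convergence to convergence of the whole sequence. Second, you give the elementary estimate that turns $\lambda_k\to\hat\lambda$ and $u_k\to\hat u$ in $L^2$ into $L^1$ convergence of $\lambda_k u_k^2$.
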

\begin{proof}
We recall that  a function \( u \in H^1_0(\Omega) \setminus \{0\}\), is an eigenfunction for the PDE in \eqref{e:EigOpt}, corresponding to the principal eigenvalue \( \lambda_1(f) >0 \), if it satisfies
\begin{equation}\label{e:eigpdevarform}
    \int_\Omega \nabla u \cdot \nabla \phi = \lambda_1(f) \int_\Omega f u \phi, \quad \text{for all } \phi \in H^1_0(\Omega),
\end{equation}
and that the eigenvalue \( \lambda_1(f) \) admits the variational characterization
\begin{equation}\label{e:lambda_1varform}
    \lambda_1(f) = \underset{u \in H^1_0(\Omega),\, u \neq 0}{\min} \frac{\int_\Omega |\nabla u|^2}{\int_\Omega f u^2}.
\end{equation}
For clarity, we denote the eigenfunction corresponding to a given \( f \) by \( u_f := u \), to emphasize its dependence on \( f \).

For any $f\in \mathcal{A}$ and $u\in H_0^1(\Omega)\setminus\{0\}$, we have
\[
\frac{\int_\Omega |\nabla u|^2\,dx}{\int_\Omega f\,u^2\,dx}
\le
\frac{\int_\Omega |\nabla u|^2\,dx}{\int_\Omega f_-\,u^2\,dx}.
\]
Taking the minimum over $u\neq 0$ and using \eqref{e:lambda_1varform} yields
\(
\lambda_1(f)\le \lambda_1(f_-).
\)
 Hence, the sequence \( \{ \lambda_1(f_k) \} \) is bounded from above and therefore admits a convergent subsequence (which we denote, for simplicity, by the same index) such that \( \lambda_1(f_k) \to \lambda \). Let us assume that all corresponding eigenfunctions are normalized so that \( \int_\Omega f_k u_{f_k}^2 = 1 \). Then, using \eqref{e:eigpdevarform}, we obtain
\[
\int_\Omega |\nabla u_{f_k}|^2 = \lambda_1(f_k) \int_\Omega f_k u_{f_k}^2 = \lambda_1(f_k)  \leq \lambda_1(f_-),
\]
which shows that the sequence \( \{ u_{f_k} \} \) is bounded in \( H^1_0(\Omega) \). Consequently, there exists a subsequence (still denoted with the same index) such that
\begin{equation}\label{e:u_f-k_conv}
   \nabla u_{f_k} \rightharpoonup \nabla u, \quad u_{f_k} \to u \quad \text{in } L^2(\Omega).
\end{equation}
In view of \eqref{e:eigpdevarform}, we have
\[
\int_\Omega \nabla u_{f_k} \cdot \nabla \phi = \lambda_1(f_k) \int_\Omega f_k u_{f_k} \phi, \quad \text{for all } \phi \in H^1_0(\Omega),
\]
and by passing to the limit using \eqref{e:u_f-k_conv}, we obtain
\[
\int_\Omega \nabla u \cdot \nabla \phi = \lambda \int_\Omega \hat{f} \, u \phi, \quad \text{for all } \phi \in H^1_0(\Omega).
\]
In addition, \eqref{e:u_f-k_conv}  and the $ f_k \overset{*}{\rightharpoonup} \hat{f}$ imply that
\[ \int_\Omega \hat f u_{{\hat f}}^2 = \lim_{k \to \infty} \int_\Omega f_k u_{f_k}^2 =1. \]
Hence, \( u \) is nontrivial
which shows that \( u \) is an eigenfunction corresponding to \( \hat{f} \).

It is well-known that the principal eigenfunction of the PDE in \eqref{e:EigOpt} does not change sign in \( \Omega \), and it is the only eigenfunction with this property; see, \eg, \cite{Henrot_2006}, \cite[Theorem 5.1]{le2006eigenvalue}. Therefore, we may assume that all functions \( u_{f_k} \) are chosen to be positive in \( \Omega \), and by \eqref{e:u_f-k_conv}, we also have a.e. pointwise convergence \( u_{f_k}(x) \to u(x) \) which means the eigenfunction $u(x)$ is positive and so it is the first eigenfunction.
 In summary, we conclude that \( \lambda = \lambda_1(\hat{f}) \) and \( u = u_{\hat{f}} \), and hence
\[
\lambda_1(f_k) u^2_{f_k} \to \lambda_1(\hat{f}) u^2_{\hat{f}} \quad \text{in } L^1(\Omega),
\]
which completes the proof.
\end{proof}
\begin{rem}
 In the Poisson case \eqref{e:PoissonOpt}, \( J'(f) = u \), where \( u \) solves the boundary value problem \eqref{e:PoissonOpt}; in the eigenvalue case \eqref{e:EigOpt}, \( J'(f) = -\lambda u^2 \), where \( u \) is the normalized principal eigenfunction. In both cases, it follows from \cite[Lemma~7.7]{GilbargTrudinger2001} that the level sets of \( J'(f) \) have zero Lebesgue measure. Therefore, the assumption in \cref{t:Convergence}(ii) is satisfied for these applications.
\end{rem}

\subsection{Convergence rate for a modified ARM for the 1D Poisson problem}
 \label{s:PropARMc}
For the ARM \eqref{e:ARM} applied to the 1D extremal Poisson problem \eqref{e:PoissonOpt}, we accelerate by extrapolating $J^\prime(f_k) =  u_k$. 
Here, we consider a modification of this algorithm, which is similar in spirit, where we can analyze to see an improved convergence rate. 

Following the notation in \cite{kao2021linear}, we consider the one‐dimensional rearrangement map
\[
y_{k+1} \;=\;h(y_k),
\]
where \(y_k\in[0,1]\) is the center of the support of the width $2\delta$ bang–bang density, 
\[
f_k(x)=f_{-} + (f_{+}-f_{-})\,\chi_{[\,y_k-\delta,\;y_k+\delta\,]}(x).
\]
Here, the map $h\colon [0,1] \to [0,1]$ is defined
\[
h(y) :=\begin{cases}
\frac{\delta}{\alpha}(f_+-f_-)(1-y) & \text{if } y>\lambda \\
y-\frac{2}{f_+-f_-}\left[f_+\delta-\sqrt{f_+^{2}\delta^{2}-y\delta(f_+-f_-)\left(f_-+\delta(f_+-f_-)\right)}\right] & \text{otherwise.}
\end{cases}
\]
and $\lambda:=\max\{3\delta-1,\frac{\delta(f_-+f_+)}{f_-+\delta(f_+-f_-)}\}$. It is shown in \cite[Theorem 1]{kao2021linear} that near the fixed point \(y=0\),
\[
h(y)\approx L\,y,
\qquad
L=\left(1-\frac{f_{-}}{f_{+}}\right)\,(1-\delta)\;<\;1.
\]
Consequently, one obtains the convergence rate for the RM,  
$$
|y_{k+1}|\;\le\; L \,|y_k|.
$$

For fixed $\theta \in (0,1]$,  we consider a \emph{modified ARM} based on accelerating the update on  \(y\):
\begin{equation}
\label{e:modifiedARM}    
\tilde y_k \;=\;y_k \;+\;\theta_k \,(y_k - y_{k-1}),
\qquad
y_{k+1} \;=\;h(\tilde y_k).
\end{equation}
For a fixed optimal choice $\theta_k \equiv \theta^* \in (0,1]$, the following theorem gives an improved rate for this \emph{optimal ARM method}. 

\begin{thm}[Geometric asymptotic convergence rate for optimal ARM \eqref{e:modifiedARM}]
\label{t:ConvRate1d}
With \(\theta_k=\theta^* = \frac{2 - L \;-\;2\sqrt{1-L}}{L}\), the modified ARM \eqref{e:modifiedARM} satisfies, for  large $k$,
\[
|y_{k+1}|\;\lesssim \;r^*\,|y_k|,
\qquad
r^* = 1-\sqrt{1-L}\;<\;L
\]
where $L<1$.
Hence
$|y_k| = O \left( (r^*)^k \right)$,
and since
\(\|f_k-f^*\|_{L^2}=(f_{+}-f_{-})\sqrt{2\,|y_k|}\),
one obtains
\[
\|f_k-f^*\|_{L^2}=O\bigl((r^*)^{k/2}\bigr),
\]
\ie, geometric asymptotic convergence of the modified ARM with rate \(r^*<L\).
\end{thm}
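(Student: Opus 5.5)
The plan is to linearize the modified ARM \eqref{e:modifiedARM} around the fixed point $y=0$ and study the resulting second-order linear recurrence. By \cite[Theorem 1]{kao2021linear}, $h(y)=Ly+o(y)$ as $y\to 0$. Indeed, $\lambda>0$, so the branch relevant near $0$ is the square-root branch, and a Taylor expansion of the square root gives the derivative $h'(0)=L$. With $\theta_k\equiv\theta$, the iteration becomes
\[
y_{k+1}=h\bigl((1+\theta)y_k-\theta y_{k-1}\bigr)=L(1+\theta)\,y_k-L\theta\, y_{k-1}+o(|y_k|+|y_{k-1}|).
\]
We write this as a first-order system $Y_{k+1}=MY_k+o(\|Y_k\|)$ with $Y_k=(y_k,y_{k-1})^T$. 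The matrix $M$ is the companion matrix of the characteristic polynomial
\[
p(r)=r^2-L(1+\theta)r+L\theta .
\]

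Next we choose $\theta$ to minimize the spectral radius $\rho(M)$. The roots of $p$ have product $L\theta$ and sum $L(1+\theta)$. When they are complex, both have modulus $\sqrt{L\theta}$, which increases with $\theta$. When they are real, the larger root decreases with $\theta$ up to the point where the discriminant vanishes. So the optimum is at the double root, where
\[
L^2(1+\theta)^2=4L\theta,\qquad\text{i.e.}\qquad L\theta^2+(2L-4)\theta+L=0 .
\]
Solving and taking the root that lies in $(0,1]$ gives
\[
\theta^*=\frac{2-L-2\sqrt{1-L}}{L}=\frac{(1-\sqrt{1-L})^2}{L}.
\]
The double root is then
\[
r^*=\tfrac{L(1+\theta^*)}{2}=\sqrt{L\theta^*}=1-\sqrt{1-L}.
\]
We verify $r^*<L$ as follows. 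Setting $s=\sqrt{1-L}\in(0,1)$, the inequality $1-s<1-s^2$ is equivalent to $s^2<s$, which holds. We also check $\theta^*\in(0,1]$: since $\theta^*=(r^*)^2/L$ and $r^*<L$, we get $\theta^*<L<1$.

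Then we pass from the linearization to the nonlinear iteration, and this is where the main obstacle lies. The matrix $M$ has a double eigenvalue, so it is a Jordan block and not diagonalizable. Powers of $M$ therefore grow like $k(r^*)^k$, and $\|M\|$ need not be smaller than $1$ in the Euclidean norm. I would handle this with the standard adapted-norm argument: for any $\varepsilon>0$ there is a norm $\|\cdot\|_\varepsilon$ on $\mathbb R^2$ with $\|M\|_\varepsilon\le r^*+\varepsilon$. Such a norm can be built from a rescaled Jordan basis, $\operatorname{diag}(1,\varepsilon)$. The differentiability of $h$ at $0$ then gives a neighborhood in which $\|Y_{k+1}\|_\varepsilon\le (r^*+2\varepsilon)\|Y_k\|_\varepsilon$. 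This yields local convergence $Y_k\to0$ and $|y_k|=O((r^*+2\varepsilon)^k)$ for every $\varepsilon$. That is the meaning of ``$\lesssim$'' and of $O((r^*)^k)$ up to the usual asymptotic, or polynomial, factor. Two side conditions need checking. First, the extrapolated point $\tilde y_k$ must stay in the domain of $h$; I would extend $h$ evenly by symmetry, or note that $\tilde y_k$ is small near the fixed point. Second, $\tilde y_k$ must stay in the square-root branch, which holds because $\tilde y_k\to0<\lambda$.

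Finally, we convert the bound to $f_k$. The sets $[y_k-\delta,y_k+\delta]$ and $[-\delta,\delta]$ have symmetric difference of measure $2|y_k|$, so
\[
\|f_k-f^*\|_{L^2}=(f_+-f_-)\sqrt{2|y_k|}=O\bigl((r^*)^{k/2}\bigr).
\]
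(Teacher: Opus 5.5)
Your proposal follows the same route as the paper. You linearize $h$ at $0$ using $h'(0)=L$, pass to the two-step recurrence with characteristic polynomial $r^2-(1+\theta)Lr+\theta L$, and take $\theta^*$ from the zero-discriminant condition to get the double root $r^*=1-\sqrt{1-L}<L$.

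The paper stops at the heuristic $y_{k+1}\approx r^*y_k$. Your additional steps make the argument rigorous: the monotonicity argument showing that the double root actually minimizes the spectral radius, the check $\theta^*=(r^*)^2/L<1$, and the adapted-norm treatment of the Jordan block. Your caveat is also correct. Already the linearized recurrence has solutions $y_k=(A+Bk)(r^*)^k$ with $B\neq 0$ in general; for instance $B=A(1-r^*)$ when $y_{-1}=y_0=A$. So the statement's $|y_k|=O((r^*)^k)$ and $|y_{k+1}|\lesssim r^*|y_k|$ hold only up to a polynomial factor, i.e.\ as $O((r^*+\varepsilon)^k)$ for every $\varepsilon>0$, which is exactly what your argument delivers.

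One correction: when $\tilde y_k<0$, the extension of $h$ must be odd, $h(-y)=-h(y)$, not even. Reflecting the configuration through $x=0$ reflects the updated center. An even extension gives $h(z)\approx L|z|$, which has a corner at $0$. That invalidates your expansion $y_{k+1}=L(1+\theta)y_k-L\theta y_{k-1}+o(|y_k|+|y_{k-1}|)$ exactly when $\tilde y_k<0$. The observation that $\tilde y_k$ is small does not by itself give $\tilde y_k\ge 0$.

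With the odd extension, $h(z)=Lz+O(z^2)$ on both sides of $0$. This holds because the square-root branch is analytic near $y=0$, where the radicand equals $f_+^2\delta^2>0$. The rest of your argument then goes through unchanged.
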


\begin{proof}
We first justify this choice of $\theta = \theta^*$. 
Linearizing \(h\) about \(0\) gives the two‐step recurrence
\[
y_{k+1} 
\;\approx\;L\,\tilde y_k
\;=\;
L\bigl(y_k + \theta(y_k-y_{k-1})\bigr)
\;=\;(1+\theta)\,L\,y_k \;-\;\theta\,L\,y_{k-1}.
\]
Define the characteristic polynomial
$r^2 \;-\;(1+\theta)L\,r \;+\;\theta\,L \;=\;0 $,
with roots 
\begin{equation}
\label{e:roots}
r_{\pm} = \frac{1}{2}(1+\theta)L \pm \frac{1}{2} \sqrt{(1+\theta)^2 L^2 - 4 \theta L}. 
\end{equation}
To make the two roots coincide (and thus minimize its spectral radius), we impose the zero‐discriminant condition
\[
\bigl[(1+\theta)L\bigr]^2 \;-\;4\,\theta\,L
\;=\;0.
\]
Choosing the root \(\theta = \theta^*\) that lies in $(0,1]$ gives
$\theta^* \;=\;\frac{2 - L \;-\;2\sqrt{1-L}}{L}$.
Substituting in \eqref{e:roots}, we obtain the (double) root
\[
r^*
=\frac{1}{2} (1+\theta^*) L 
=1\,-\,\sqrt{\,1-L\,}.
\]
Since \(L \in (0,1)\), we have that 
$0 < r^* = 1-\sqrt{1-L} < L < 1$.
The recurrence equation then gives 
\[
y_{k+1} \;\approx\; (1+\theta^*)L\,y_k - \theta^*L\,y_{k-1}
\;\approx\; r^*\,y_k,
\]
as desired.
\end{proof}

Note that if $\theta_k = \frac{k-1}{k+2}$ is chosen as the parameter in the Nesterov's acceleration method, $\theta$ increases gradually as $k$ increases and $\theta_k = 1$ when $k \to 1$. For large $k$, the roots of \eqref{e:roots} becomes complex and $|r| \to \sqrt{L}$ which is greater than $L$.    

\begin{figure}[t!]
\centering
\includegraphics[width=0.48\textwidth]{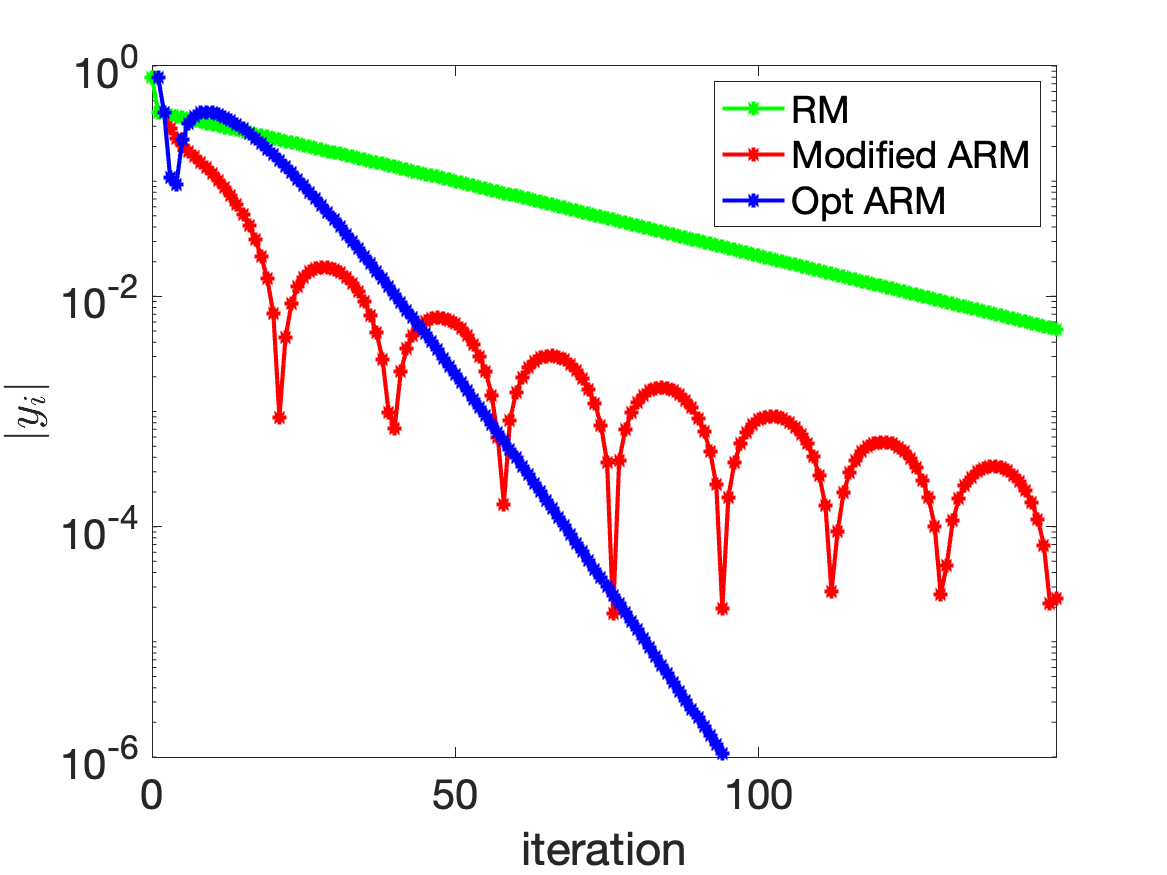} \includegraphics[width=0.48\textwidth]{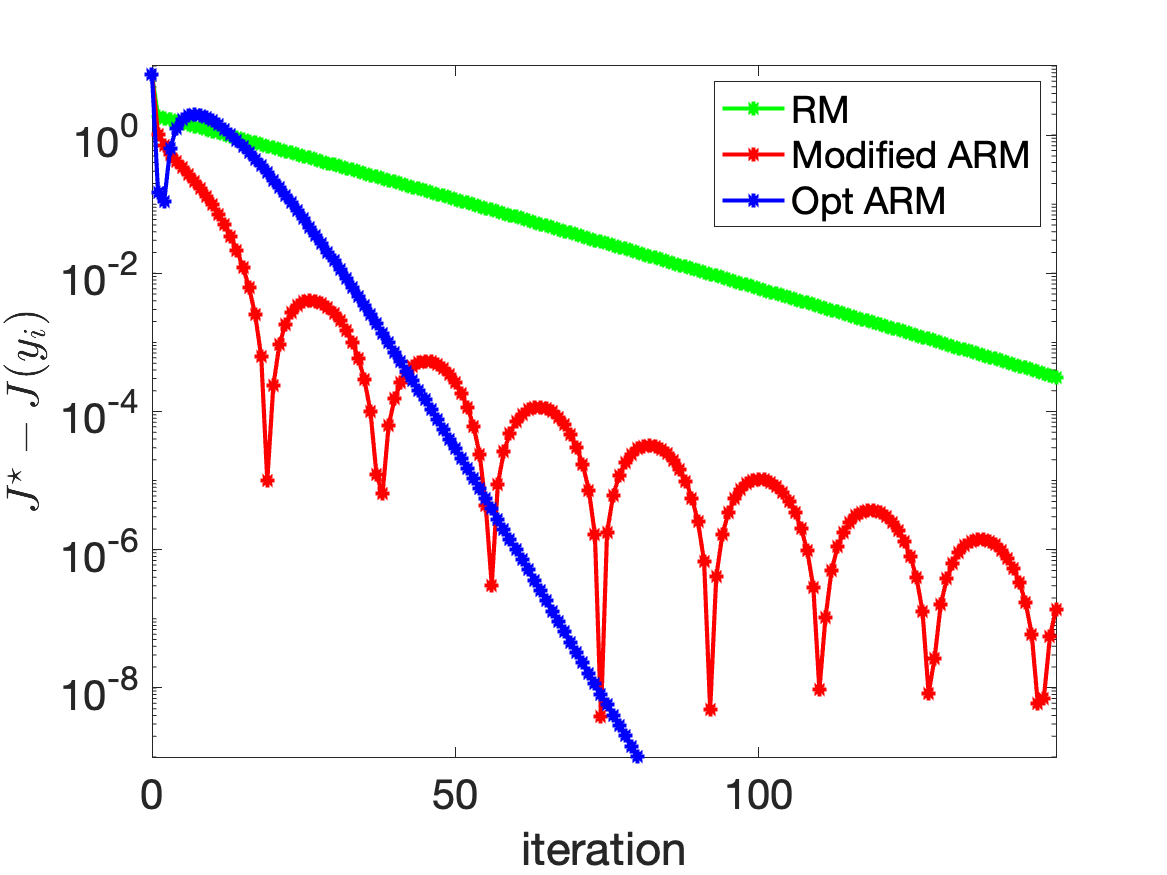} \\ 
\bigskip
\includegraphics[width=0.48\textwidth]{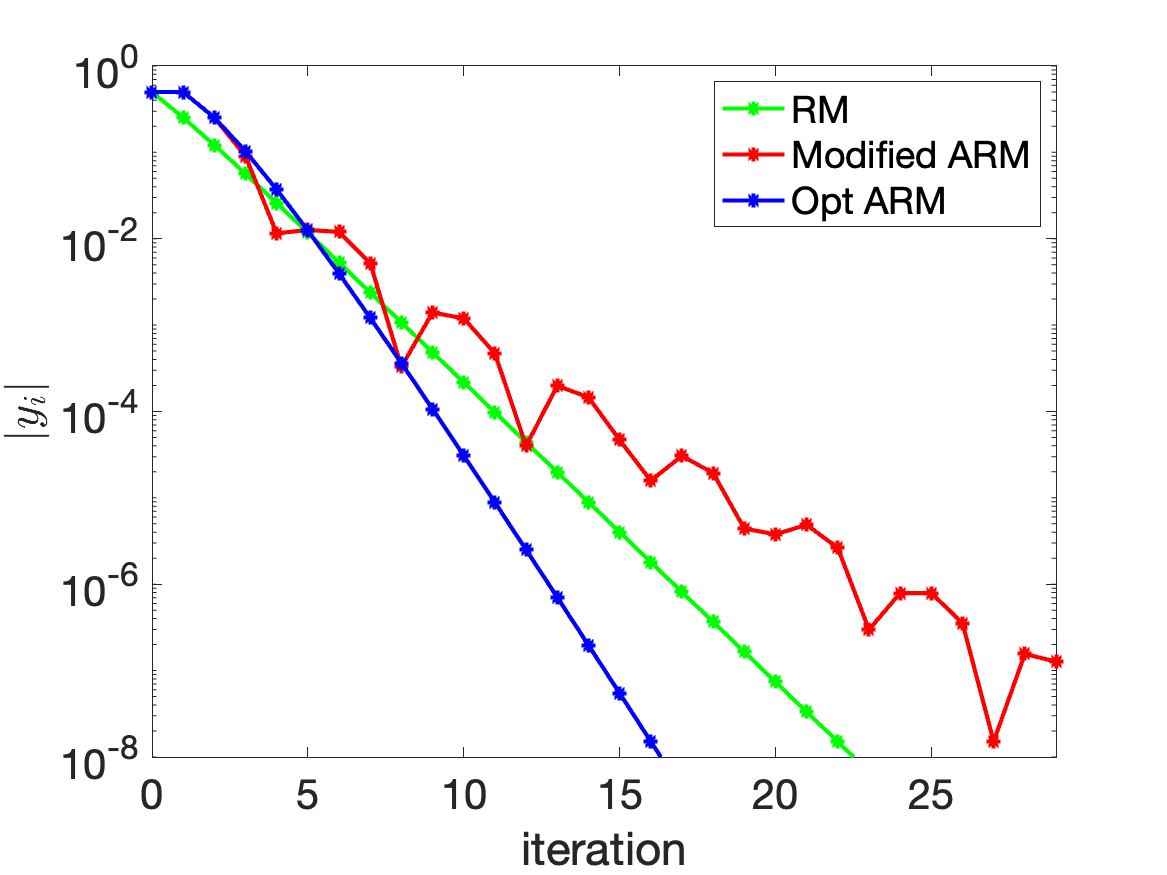} \includegraphics[width=0.48\textwidth]{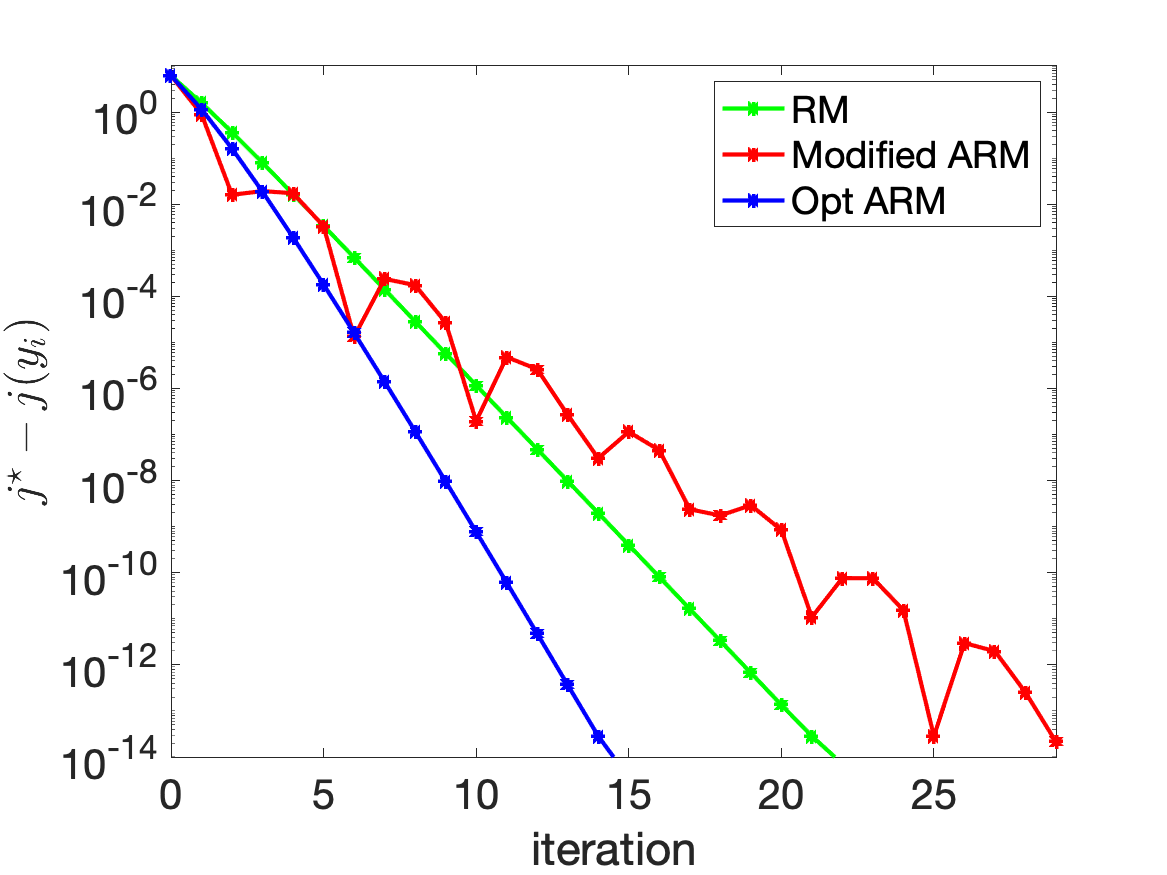}
\caption{Convergence of  $y_k$ {\bf(left)} and $J^* - J(y_k)$  {\bf (right)}  for two choices of parameters and initial conditions. 
 {\bf (top)} $ f_- = 1, f_+ = 100, \delta = 0.02$, and $f_0 = f_- + (f_+ -f_-) \chi_{(0.8-\delta,0.8+\delta)}$. 
 {\bf (bottom)} $ f_- = 1, f_+ = 10, \delta = 0.5$, and $f_0 = f_- + (f_+ -f_-) \chi_{(0.5-\delta,0.5+\delta)}$. See \cref{s:PropARMc} for details.}
\label{fig:Jconvergence}
\end{figure}

To illustrate \cref{t:ConvRate1d,t:ConvRate1d}, we perform a numerical experiment on Poisson's problem \eqref{e:PoissonOpt}.  In \cref{fig:Jconvergence}, we plot the center of the interval $y_i$ and the error $j(y_i)-j^*$. \Cref{fig:Jconvergence}(top) illustrates the high contrast and small $\delta$ regime as $f_+ / f_- = 100$ and $\delta=0.02$. In this case, the original rearrangement method (RM) is convergent with geometric rate  $L = (1-\frac{f_-}{f_+})(1-\delta) \approx 0.97$ (see the green curves). The modified accelerated rearrangement method (Modified ARM) with $\theta_k = \frac{k-1}{k+2}$ reaches a smaller error in the first few iterations as shown in the red curves. However, the error is not monotone decreasing. Instead, $y_i$ bounces toward $0$ as observed in Nesterov's acceleration method applied to other problems. The results of optimal ARM  are shown in the blue curves which demonstrate a faster convergence with $r^* \approx 0.83$.  

In \cref{fig:Jconvergence}(bottom),  we choose  $f_+ / f_- = 10$ and $\delta=0.5$. The original rearrangement method (RM) generates the optimal solution with an error less than $10^{-14}$ with $22$ iterations (See the green curves). It converges with the rate constant $L=(1-\frac{f_-}{f_+})(1-\delta)=0.45$. As observed in \cite{kao2021linear}, the arrangement method converges in a few iterations for low contrast regime and small $\delta$. It is interesting to see that the modified ARM does not accelerate the convergence and performs worse than RM. The optimal ARM method produces faster convergence results shown in blue curves with the geometric rate constant $r\approx 0.26$ and reaches an error of the objective function less than $10^{-14}$ with $15$ iterations.

\begin{algorithm}[t!]
\DontPrintSemicolon
\KwIn{A triangular mesh $\mathcal{T}_h$ of $\Omega$, 
 constants $f_+ > \overline{f} > f_- >0$, 
initial function $f_0 \in \{ f_-, f_+ \}^{|\mathcal{T}_h|}$, 
and tolerance $\varepsilon > 0$.}
\KwOut{Sequence $\{f_k\}$ with $f_k\in \{ f_-, f_+ \}^{|\mathcal{T}_h|}$ converging towards a stationary point of \eqref{e:PoissonOpt}.}
\textbf{Initialize:} Set $k_0 = 0$.\;
\For{ k = 0,1,2,\ldots }{
  \textbf{(Solve the Poisson problem.)} Using  Lagrange $P_1$ or $P_2$ finite elements (continuous piecewise‑linear or quadratic, respectively) to approximately solve $-\Delta u_k = f_k$ on $\Omega$ with $u_k = 0$ on $\partial \Omega$.\;
  \textbf{(Compute cell‐average of the Fr\'echet derivative.)} Project \( J'(f_k) = u_k\)  into the (discontinuous)  \(P_0\) space:
    \[
      \bar u_k = \mathrm{Proj}_{P_0}(u_k),
      \quad
      \bar u_k|_T = |T|^{-1} \int_T u_k\,dx,
      \quad T \in \mathcal{T}_h. 
    \]\;
    \textbf{(Momentum‐extrapolate the Fr\'echet derivative.)}  Let $\theta_k' = \frac{k - k_0}{k - k_0 +3} \in [0,1)$ and set 
    \[
      g_k = \bar u_k \;+\; \theta_{k}' \bigl(\bar u_k-\bar u_{k-1}\bigr). 
    \]\;
    \textbf{(Threshold the extrapolated Fr\'echet derivative.)}  Propose a density using the bang–bang threshold:
    \[
      \tilde f 
      = f_- + (f_+-f_-)\,\chi_{\{g_k(x)\ge c_k  \,\}},
    \]
    where \( c_k \) is chosen so that \( |\Omega|^{-1} \int_\Omega \tilde f\,dx \approx \overline{f} \).\;
    \textbf{(Assert monotonicity.)} 
    \uIf{$J(\tilde{f}) > J(f_{k})$}{
        \textbf{(Accept the proposed update.)}  Set  $$f_{k+1} = \tilde{f}.$$ \; 
    }
    \Else{
        \textbf{(Restart acceleration.)}  Update $k_0 = k$ (so that $\theta_{k}' = 0$) and take the RM update 
$$f_{k+1} = f_- + (f_+-f_-)\,\chi_{\{\bar{u}_k \ge c_k\,\}} 
$$ 
where \( c_k \) is chosen so that \( |\Omega|^{-1} \int_\Omega f_{k+1}\,dx \approx \overline{f} \).\;
    }
    \textbf{(Check convergence.)} If $\|f_{k+1} - f_k\|_{L^2(\Omega)} \leq \varepsilon$, STOP. \;
}
\caption{Restarted Accelerated Rearrangement Method (RARM) for the extremal Poisson problem \eqref{e:PoissonOpt}.}
\label{a:ARM}
\end{algorithm}

\section{Computational Results}
\label{s:CompRes}
In this section, we consider the practical performance of the RM \eqref{e:RM}, ARM \eqref{e:ARM}, and RARM \eqref{e:RARM} for solving general two-phase composite optimization problems of the form \eqref{e:gen2phase} and, in particular, the extremal Poisson problem \eqref{e:PoissonOpt} and the minimal principal eigenvalue problem for the weighted Dirichlet-Laplacian \eqref{e:EigOpt}. 
We first describe some computational details for our implementation (\cref{s:CompMeth}) and then consider several computational experiments demonstrating and comparing the behavior of these methods (\crefrange{s:CompRes2DPoisson}{s:CompCuboid}).

\subsection{Computational details for the  accelerated rearrangement method} \label{s:CompMeth}
In \cref{a:ARM}, we describe the numerical implementation of the RARM for the extremal Poisson  problem \eqref{e:PoissonOpt}. In brief, at each iteration, we solve the Poisson Equation with a given $f$ using the finite element method, then rearrange $f$ according to an extrapolated Fr\'echet derivative of the objective. 
The RM is the same as \cref{a:ARM}, except we take $\theta_k = 0$ for every $k\geq 0$. 
The ARM (without restarting) is the same as \cref{a:ARM}, except we take $k_0=0$ at every iteration $k\geq 0$ so that $\theta'_k = \theta_k = \frac{k}{k+3}$.  

To modify the method for other optimization problems of the form \eqref{e:gen2phase}, one must change the PDE and the Fre\'echet derivative. For example, for the extremal eigenvalue problem \eqref{e:EigOpt}, the momentum-extrapolation step  is made based on the projected values of $J'(f) = \lambda_1 u_1^2$. In general, computing the Fr\'echet derivative may require solving an adjoint PDE. For instance, consider the non-energetic optimal control problem 
\( J(f) := \int_\Omega j(x,u) \), 
where \( j(x,u) \) is convex in \( u \) and \( u=u_f \) solves \( -\Delta u = f \) with \( u = 0 \) on \( \partial \Omega \). Then the derivative is given by \( J'(f) = w \), where \( w\) solves the adjoint equation \( -\Delta w = \frac{\partial j}{\partial u}(x,u_f) \) in \( \Omega \) with homogeneous Dirichlet boundary conditions \cite{chambolle2025stability}.

We implement \cref{a:ARM} to solve \eqref{e:PoissonOpt} in \verb+Python+ and use the \verb+FEniCS+ software library to solve the Poisson Equation via the finite element method with Lagrange $P_1$ or $P_2$ finite elements. To generate domain meshes, we either use the built-in \verb+FEniCS+ meshes for regular domains or \verb+mshr+ to generate meshes for irregular domains. 
In the thresholding step, we identify the set $D = \{g(x) \ge c  \,\}$ so that \( c \) satisfies 
\( |\Omega|^{-1} \int_\Omega f_- + (f_+-f_-) \chi_D \,dx \approx \overline{f} \), as follows. 
Let $\pi$ be a permutation sorting the values of $g$ in descending order, so that $g_{\pi(1)}\ge g_{\pi(2)}\ge\cdots\ge g_{\pi(N)}$. Define the cumulative volume
$S_k=\sum_{j=1}^k |T _{\pi(j)}|$,
and let $\mathcal K$ be the largest index with $\frac{S_\mathcal K}{|\Omega|} \leq \delta $. We then take $D$ to be the union of the mesh elements that have $\mathcal K$ largest values of $g$, 
$D = \cup \{ T_i \colon i\in\{\pi(1),\ldots,\pi(\mathcal K)\} \}$.

To solve \eqref{e:EigOpt}, we make the following modifications to the implementation described above. The method is again implemented in \verb+Python+, using the  \verb+FEniCS+  software library to construct the Lagrange $P_2$ stiffness and mass matrices. The eigenvalue problem is solved using the \verb+krylov-schur+ solver implemented in the \verb+SLEPc+ software library. We threshold based on  the momentum-extrapolated projected values of $J'(f) = -\lambda_1 u_1^2$. 
    
All of the following numerical experiments were performed on a 2022 laptop computer with an Apple M2 processor and 16GB of memory.

\subsection{The extremal Poisson problem \eqreftitle{e:PoissonOpt} on a rectangle} \label{s:CompRes2DPoisson}
We consider the extremal Poisson problem \eqref{e:PoissonOpt} on the rectangle $\Omega = [0,2]\times [0,1]$. We take a uniform $512\times 256$ grid, generate a regular triangular mesh with 262,144 cells, and use Lagrange $P_2$ finite elements. 
We choose parameters $f_-=1$, $f_+=10$, and $\delta = 0.2$. 
The results are displayed in \cref{fig:2dPoissonConvergence}.
We observe that the RM objective function values are monotonic and converge at iteration 74 to a value of 2.579523.
The ARM objective function values are not monotonic (they display the 'bouncing' behavior that is characteristic for accelerated methods) and converge at iteration 60 to a value of 2.579524. 
Finally, RARM restarts at iteration 13, so that the objective function values are monotonic and converge at iteration 36 to a value of 2.579524.  
In all three cases, the iterations terminate  because $\|f_{k} -f_{k-1}\| = 0$.

\subsection{The extremal Dirichlet-Laplacian eigenvalue problem \eqreftitle{e:EigOpt} on a rectangle} \label{s:CompRes2Deig}
We consider the extremal eigenvalue problem
\eqref{e:EigOpt} on the two-dimensional rectangle $\Omega = [0,2]\times [0,1]$. We take a uniform $128\times 64$ grid, generate a triangular mesh with 16,384 cells, and use Lagrange $P_1$ finite elements. 
We choose parameters $f_-=1$, $f_+=10$, and $\delta = 0.2$. The results are displayed in \cref{fig:2dEigConvergence}.
We observe that the RM objective function values are monotonic and converge at iteration 55 to a value of 1.7600.
The ARM objective function values are not monotonic and converge at iteration 36 to a value of 1.7600. 
Finally, RARM restarts at iteration 13, so that the objective function values are monotonic and converge at iteration 14 to a value of 1.7600.  
In all three cases, the iterations terminate  because $\|f_{k} -f_{k-1}\| = 0$.

\subsection{The extremal Poisson problem \eqreftitle{e:PoissonOpt} on a starfish shaped domain} \label{s:CompStarfish}

We consider the extremal Poisson problem \eqref{e:PoissonOpt} on a punctured starfish shaped domain of the form 
$$
\Omega = \big\{ (r,\theta) \colon \frac{4}{10} < r< 1 + \frac{3}{10}\cos(5 \theta) \big\},
$$
 with $f_- = 1$, $f_+ = 10$, and $\delta = 0.25$. 
We use \verb+meshr+ to generate a triangular mesh in $\Omega$ with 111,821 triangles and use Lagrange $P_2$ finite elements. 
Note that, unlike the rectangular grid discussed in \cref{s:CompRes2DPoisson}, the grid in this example is not regular. 

The results of the experiment are shown in \cref{fig:Starfish}. 
We observe that the RM objective function values converge at iteration 63 to a value of 1.795670. 
Both the ARM and RARM converge at iteration 34 to a value of  1.795671. 
Both ARM and RARM take fewer iterations than RM to reach approximately the same value. 
Note that because the mesh is irregular, the values for RM and RARM are only approximately monotonic; during the threshold step, the exact fraction of mesh cells cannot be chosen to satisfy the constraint $\frac{|D|}{|\Omega|} = 0.25$, so the constraint is only approximately satisfied.

\subsection{The extremal Poisson problem \eqreftitle{e:PoissonOpt} on a cut Gourat surface} \label{s:CompResGourat}
Define the quartic polynomial
$
\phi(x,y,z) = x^4 + y^4 + z^4 \;-\;(x^2 + y^2 + z^2)\;+\;\tfrac{9}{20}$. 
The zero‐level set \(\{\phi=0\}\) is a smooth closed surface, known as a \emph{Gourat surface}.  
To introduce a boundary, we remove a small ``corner'' patch of the surface in the first octant. More precisely, define
$R \;=\; \left\{(x,y,z)\in \mathbb R^3 \colon x\geq\frac{1}{2}, \; y\geq \frac{1}{2}, \; z\geq\frac{1}{2}\right\}$.
We then define the \emph{cut Gourat surface} by 
$$
\Sigma = \{(x,y,z)\in \mathbb R^3 \colon \phi(x,y,z) = 0 \,\, \textrm{and} \, \, (x,y,z) \notin R\}.
$$ 
The boundary of $\Sigma$ has three connected components. 
A plot of $\Sigma$ is given in \cref{fig:CompResGourat}. 

To construct $\Sigma$ numerically, 
we sample \(\phi\) on a $150^3$ uniform grid in \([-1.2,1.2]^3\) and 
use the marching‐cubes algorithm implemented in \verb+skimage.measure+ to generate a triangular surface mesh with 220,082 cells. 
We then use FEniCS to solve the Poisson problem
\begin{align*}
-\Delta_{S} u &= f  && \text{on } \Sigma \\
u &= 0 &&  \partial \Sigma    
\end{align*}
using Lagrange $P_2$ finite elements
Here, $\Delta_S$ is the Laplace–Beltrami operator. 
The extremal Poisson problem is then defined as in \eqref{e:PoissonOpt}. 
We choose $f_- = 1$, $f_+ = 10$, and $\delta = 0.25$.

The results are shown in \cref{fig:CompResGourat} and \cref{fig:CompResGourat2}. Note that we observe symmetry breaking; the optimal density $f_*$ does not share the triangular symmetry of the domain.  
We observe that the RM objective function values converge at iteration 99 to a value of $1.102362\times10^3$. 
ARM converges at iteration 51 to a value of  $1.102351\times 10^3$. 
Finally, RARM converges at iteration 12 to a value of 
$1.101330\times 10^3$. 
Here, we see that RARM converges early. ARM takes significantly fewer iterations to reach approximately the same value as RM. 

This example illustrates that the proposed methods naturally extend to more general settings, such as elliptic PDEs on Riemannian manifolds.

\subsection{The extremal Poisson \eqreftitle{e:PoissonOpt} on a cuboid} \label{s:CompCuboid}
We consider  \eqref{e:PoissonOpt} on the cuboid $\Omega = [0,2]\times [0,1] \times [0,1]$. 
We generate a tetrahedral mesh with 324,000 cells  use Lagrange $P_1$ finite elements.
We choose parameters $f_-=1$, $f_+=10$, and $\delta = 0.28$. 
The results are displayed in \cref{fig:Cuboid}.
We observe that the RM objective function values are monotonic and converge at iteration 44 to a value of $J = 2.217184$.
The ARM objective function values are not monotonic and converge at iteration 30 to a value of $J = 2.217185$. 
Finally, RARM restarts at iteration 9, so that the objective function values are monotonic and converge at iteration 14 to a value of $J=2.217185$.  

\section{Discussion and Future Directions}
\label{s:Disc}
In this paper, we propose an accelerated rearrangement method (ARM) \eqref{e:ARM} and restarted ARM \eqref{e:RARM} to solve a class of nonconvex two-phase composite optimization problems and apply them to find the maximal total displacement or work energy of a membrane \eqref{e:PoissonOpt}, as well as the optimal density function of a membrane that minimizes its first eigenvalue (fundamental frequency) \eqref{e:EigOpt}. We prove that RARM has the property that, under mild assumptions, every weak-$*$ accumulation point of its iterates lies in the admissible class and satisfies the first‐order necessary condition for optimality.
In one dimension, we derive the convergence rate for the ARM and show that it indeed converges faster due to a smaller geometric rate constant. Estimating sharp convergence rates analytically in higher dimensions is challenging, especially when the domain is complex, such as being multiply-connected or having high genus. Nevertheless, our numerical results consistently show faster convergence for the newly proposed ARM and its restarted variant across many examples.

It would be interesting to study how ARM methods perform in other applications. In population dynamics optimization problems, it is important to find the best ways to control population size by allocating resources, adjusting birth and death rates, or designing treatments. In photonic crystal optimization problems, the goal is to design periodic structures with desired optical properties, such as maximizing the width of spectral gaps or minimizing losses in waveguides. Since solving the governing equations in these applications is often more computationally demanding, developing efficient and robust ARM methods could significantly improve the performance in finding optimal solutions for these problems.

\subsection*{Acknowledgments}
The authors would like to thank the International Centre for Mathematical Sciences (ICMS) in Edinburgh, Scotland for hosting a Research-in-Groups program to work on this project together at the birthplace of James Clerk Maxwell.

\bibliographystyle{siam}
\bibliography{refs}

\begin{figure}[p!]
\begin{center}
\includegraphics[width=0.45\textwidth]{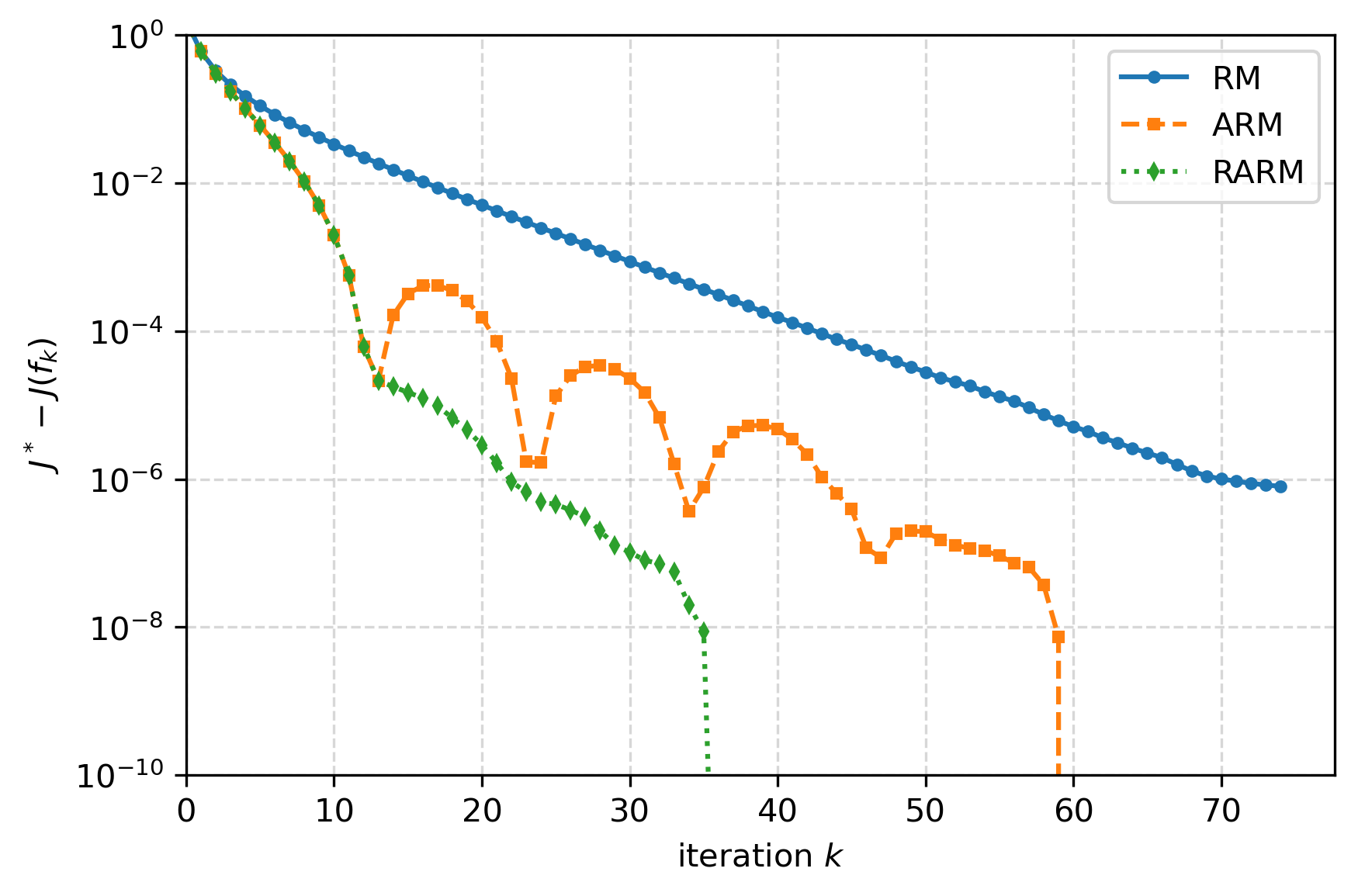}
\includegraphics[width=0.45\textwidth]{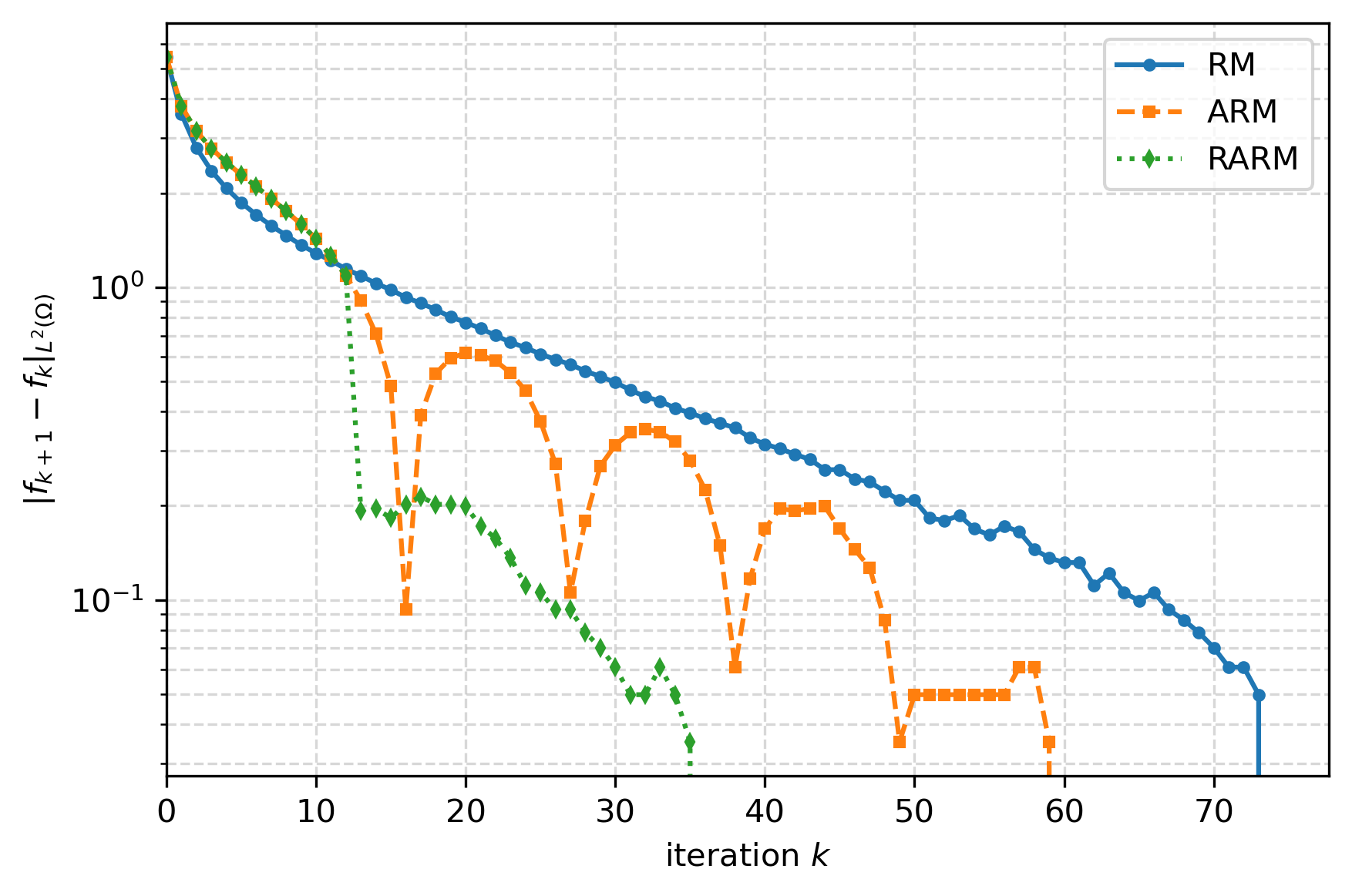} \\ 
\medskip
\includegraphics[height=0.23\textwidth]{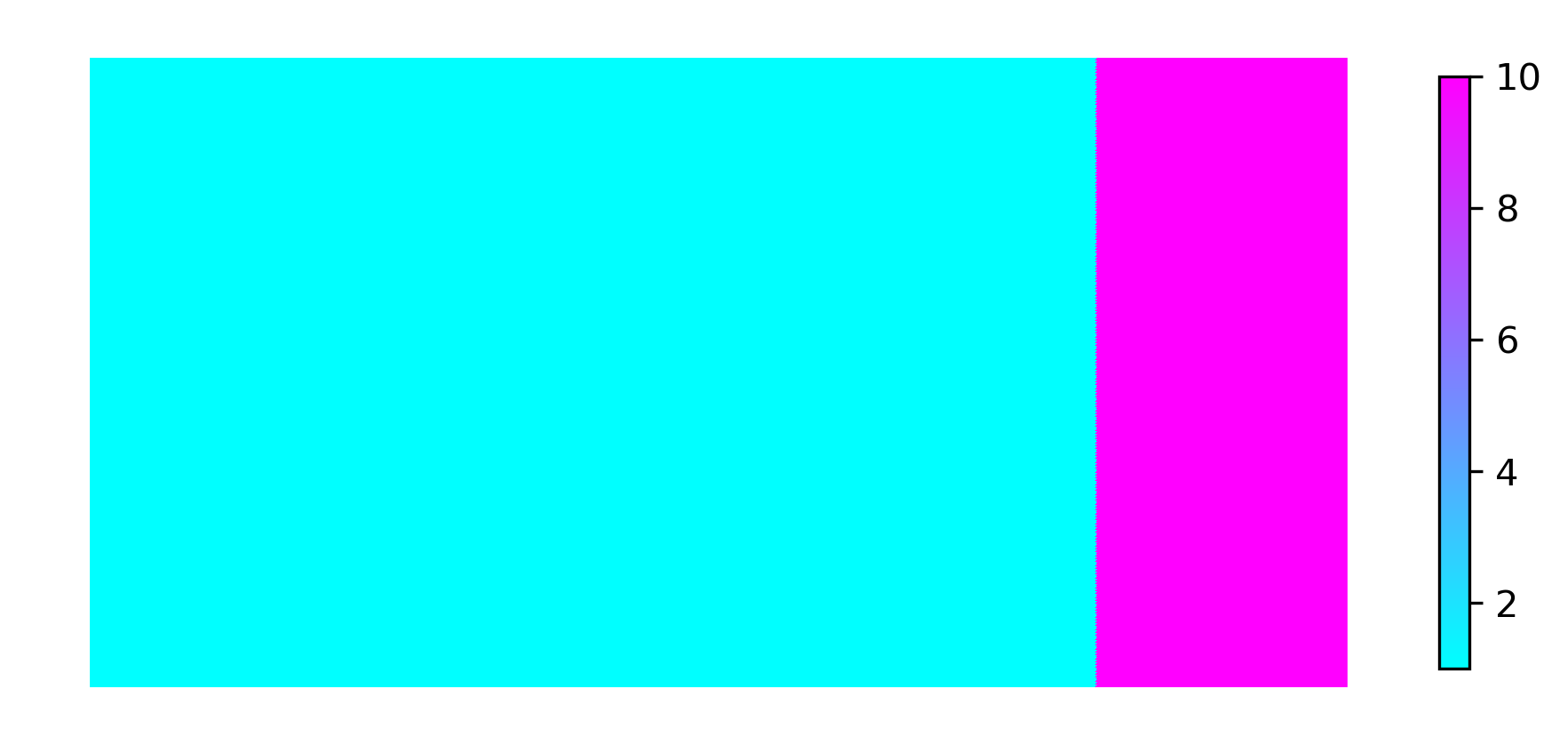}
\includegraphics[height=0.23\textwidth]{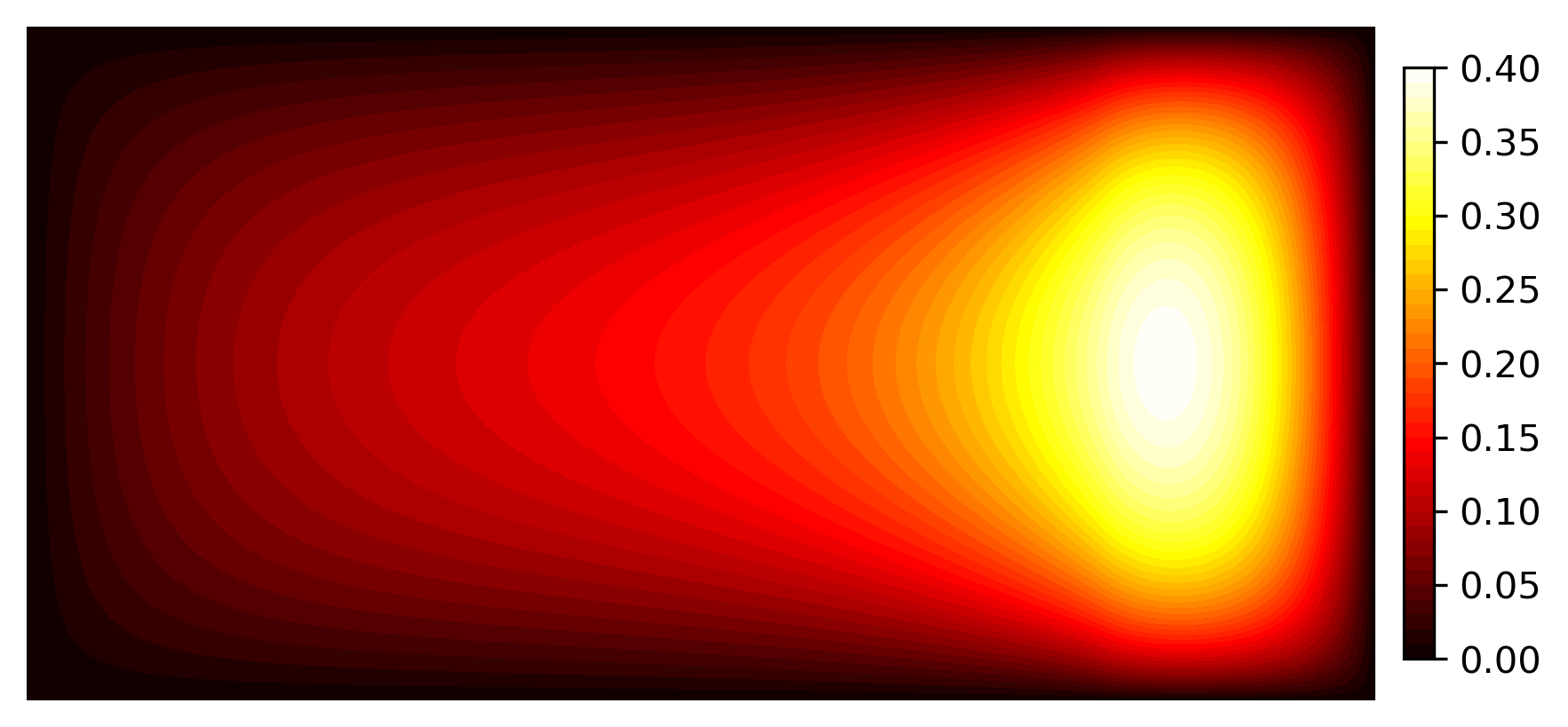} \\ 
\medskip
\includegraphics[height=0.23\textwidth]{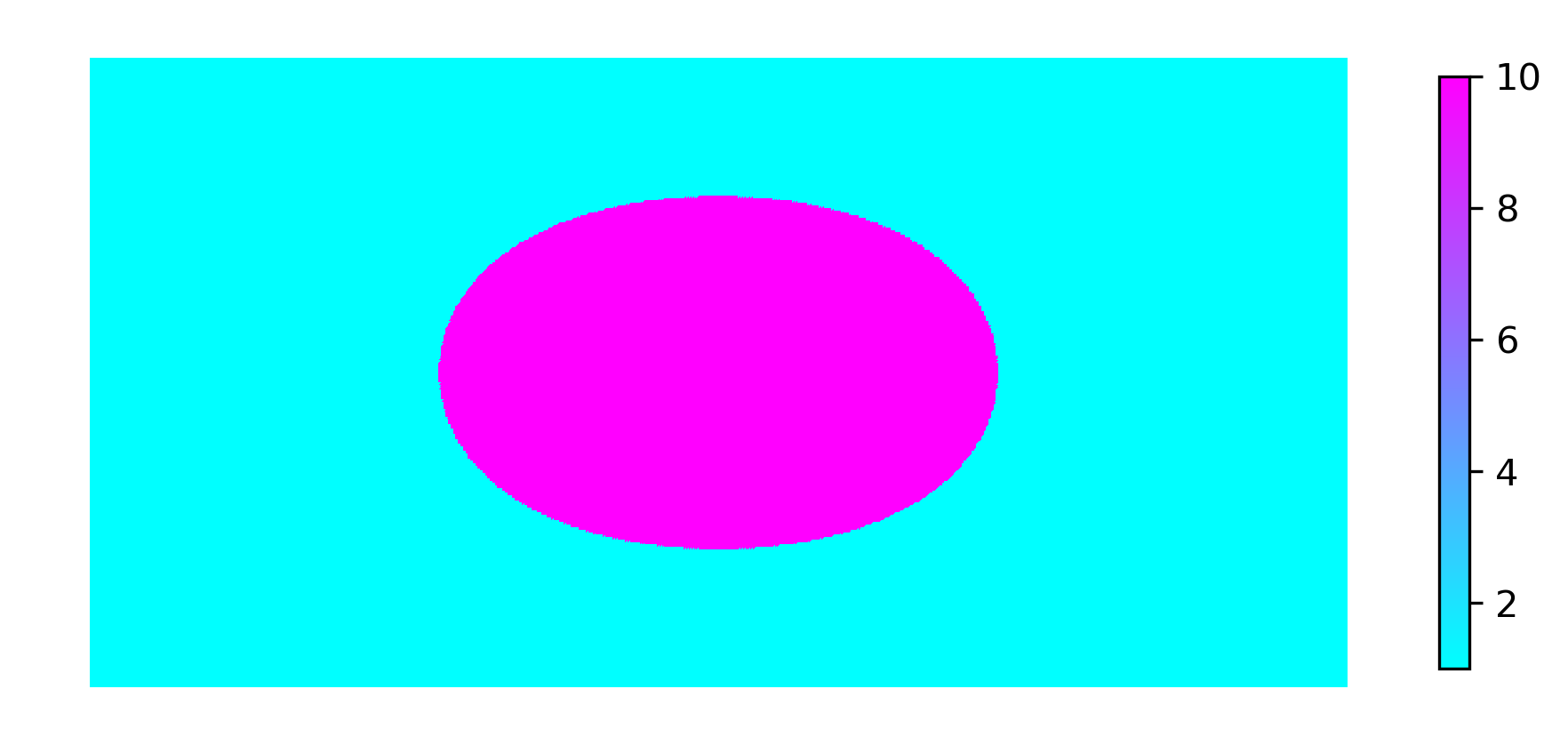}
\includegraphics[height=0.23\textwidth]{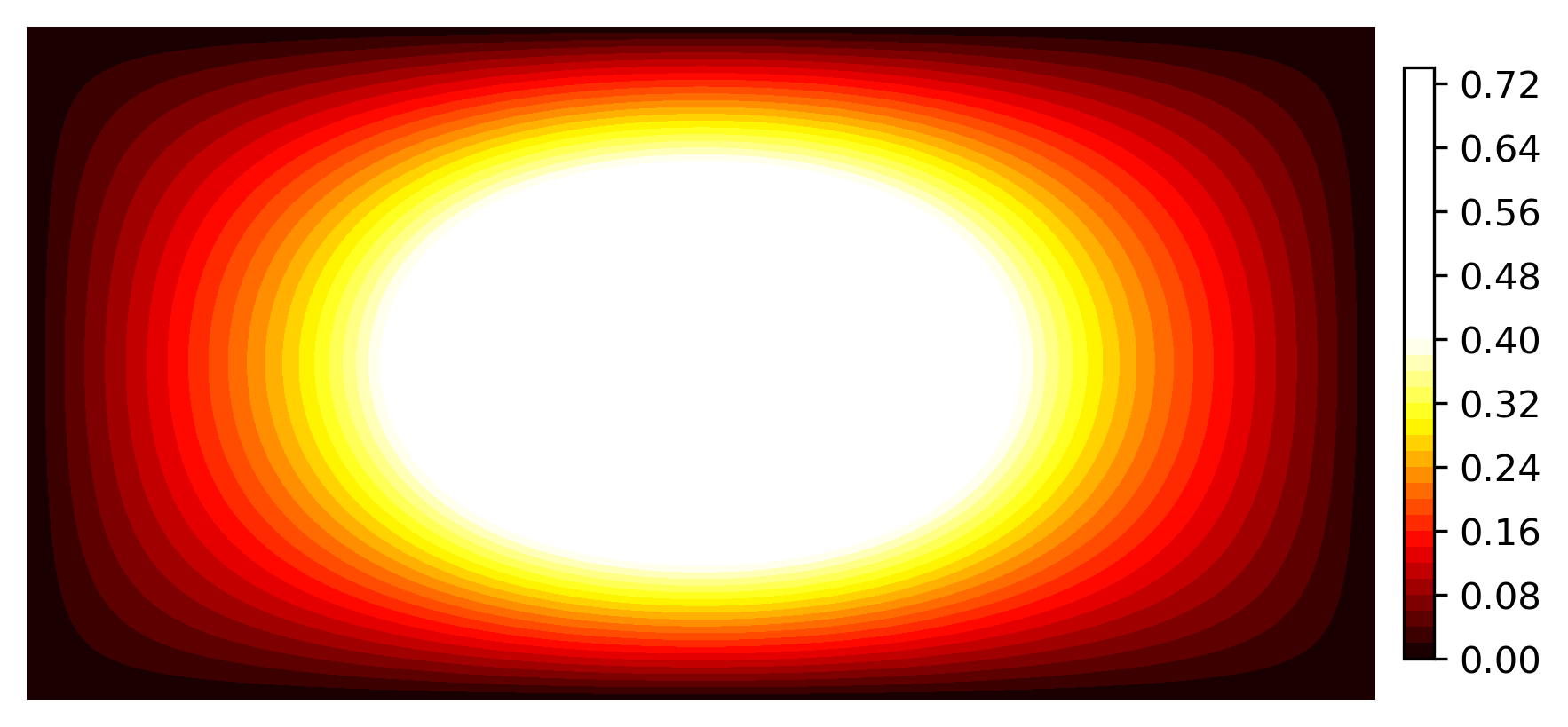} 
\end{center}
\caption{We consider  \eqref{e:PoissonOpt} on the rectangle $\Omega = [0,2]\times[0,1]$ with  $f_- = 1$, $f_+ = 10$, and $\delta = 0.2$. 
We plot the iteration $k$ vs. $J^* - J(f_k)$ {\bf (top left)} and $\|f_{k+1} - f_{k}\|_{L^2(\Omega)}${\bf (top right)} for the 
rearrangement method (RM) \eqref{e:RM},  
accelerated RM (ARM) \eqref{e:ARM}, and 
restarted ARM (RARM) \eqref{e:RARM}. 
In the middle panels, we plot the initial density $f_0$ {\bf (middle left)} and initial solution  $u_0$ {\bf (middle right)}. 
In the lower panels, we plot the final density $f^*$ {\bf (bottom left)} and final solution  $u^*$ {\bf (bottom right)}. 
See \cref{s:CompRes2DPoisson} for details.}
\label{fig:2dPoissonConvergence}
\end{figure}

\begin{figure}[t!]
\begin{center}
\includegraphics[width=0.45\textwidth]{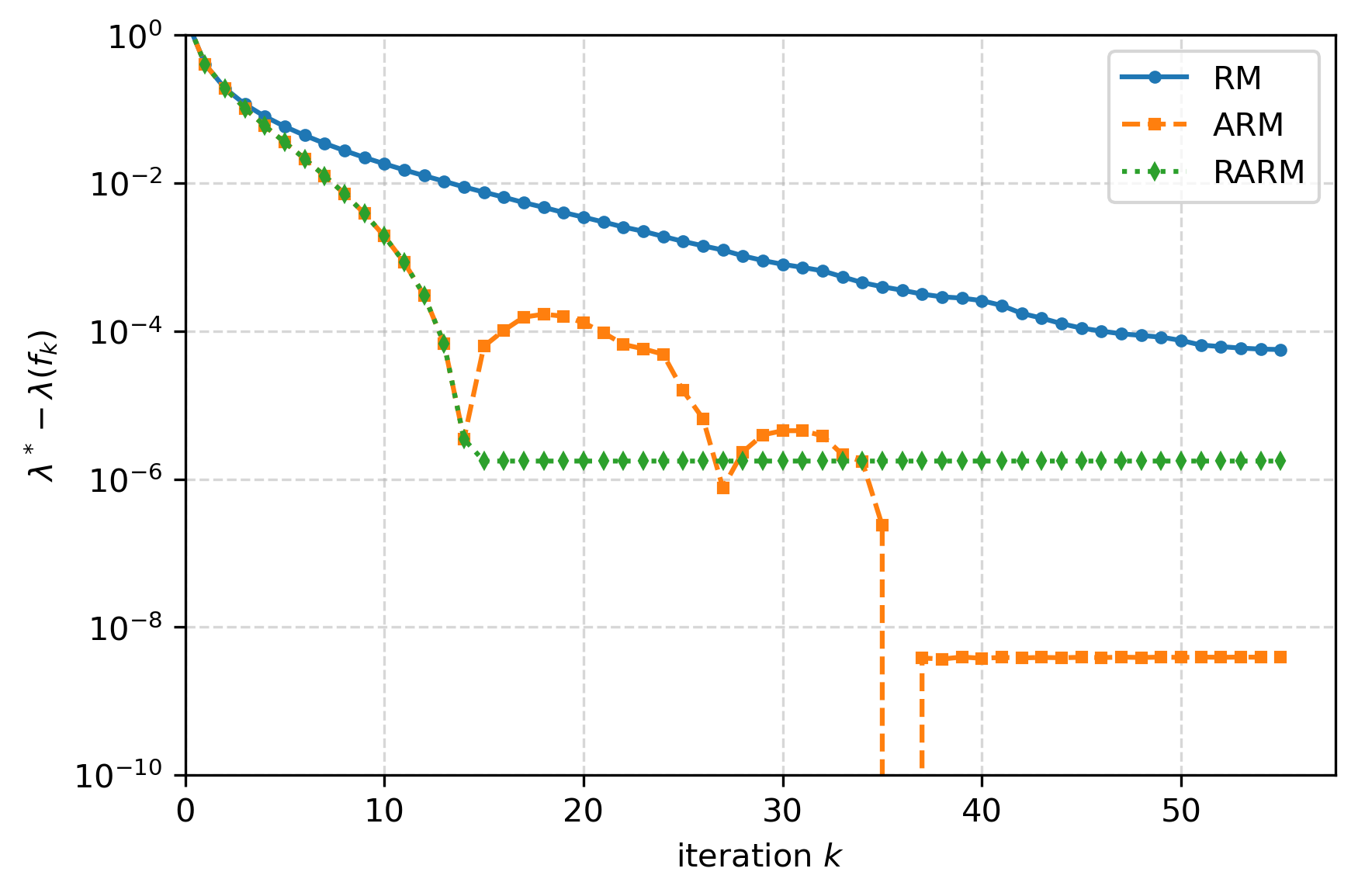}
\includegraphics[width=0.45\textwidth]{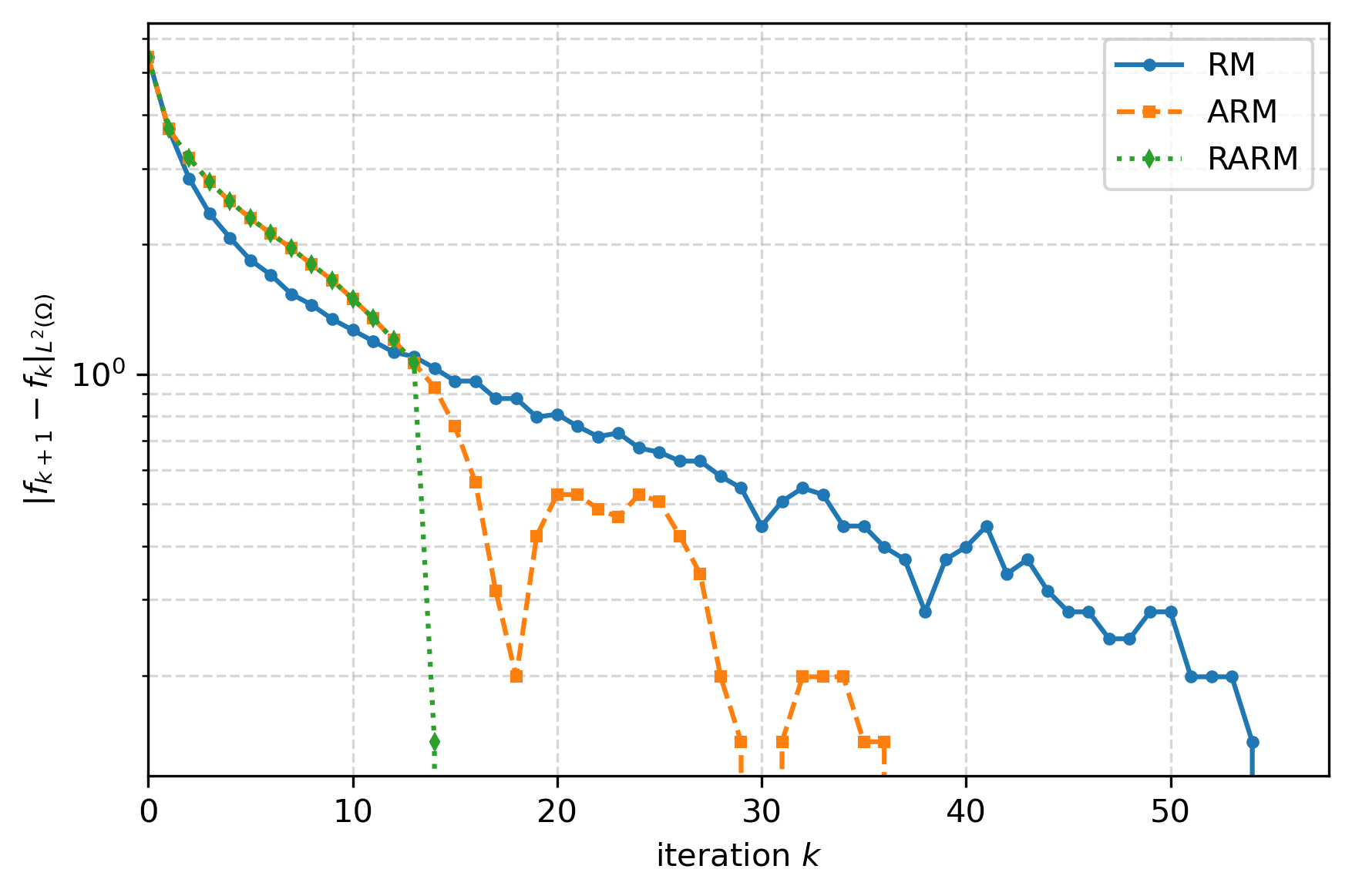}\\
\medskip
\includegraphics[height=0.23\textwidth]{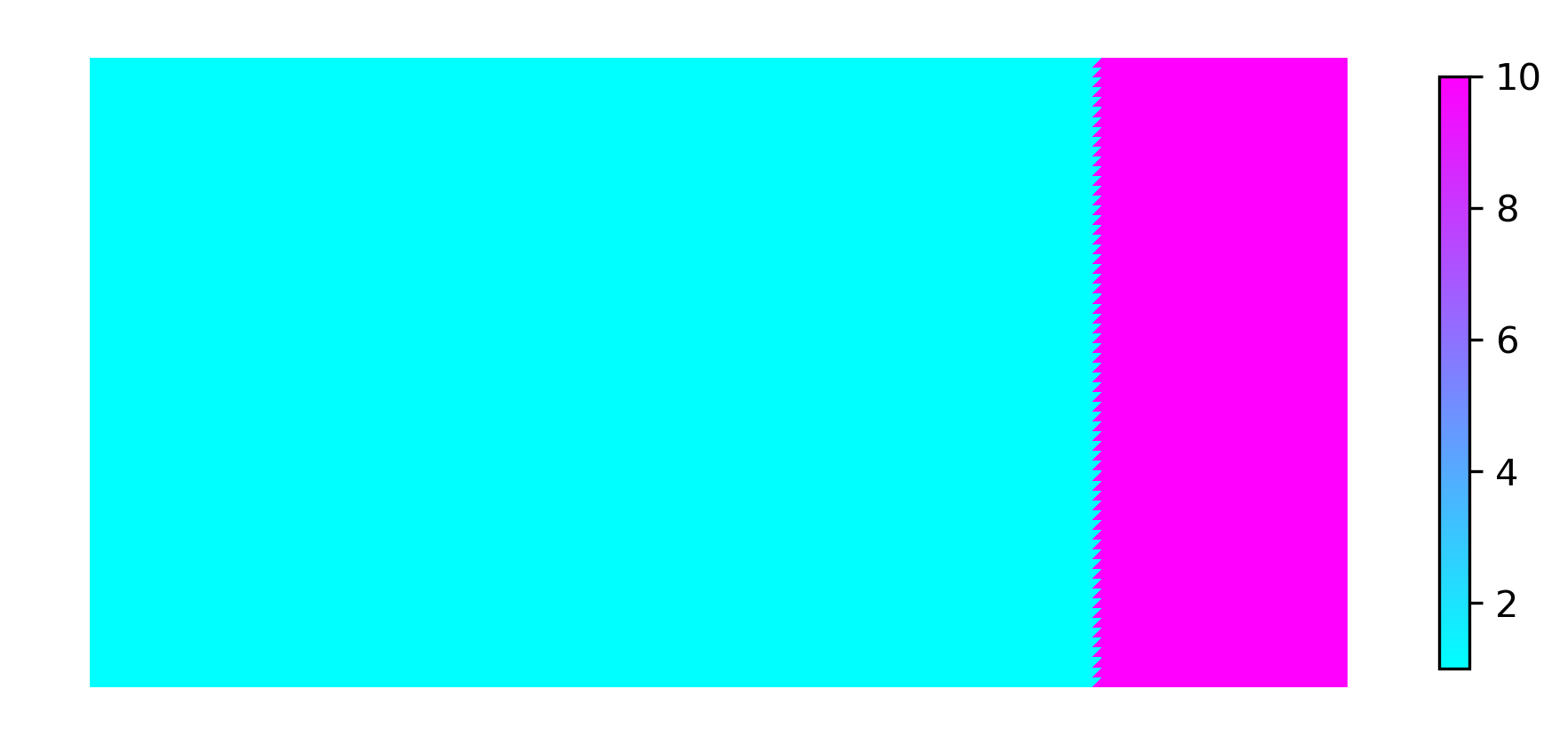}
\includegraphics[height=0.23\textwidth]{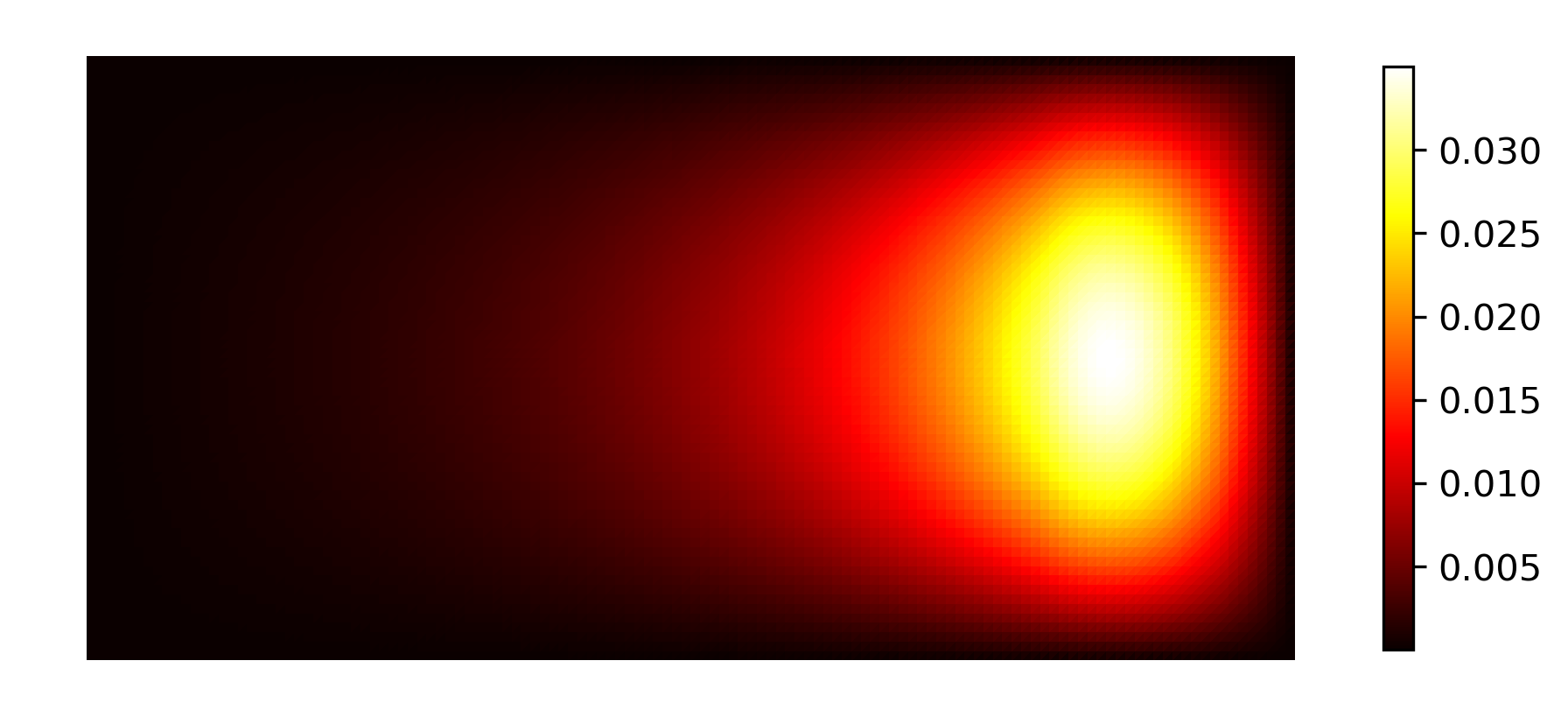} \\ 
\medskip
\includegraphics[height=0.23\textwidth]{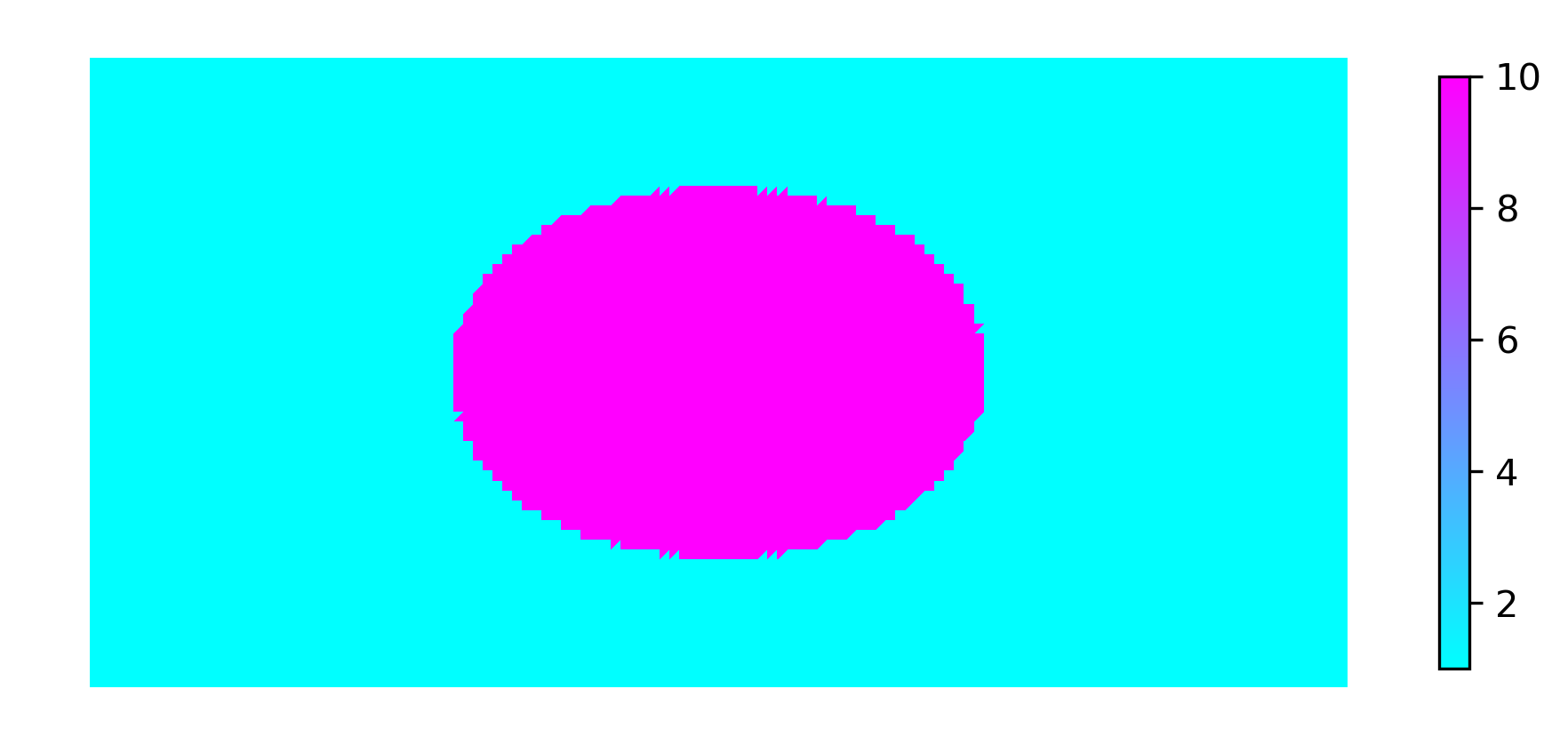}
\includegraphics[height=0.23\textwidth]{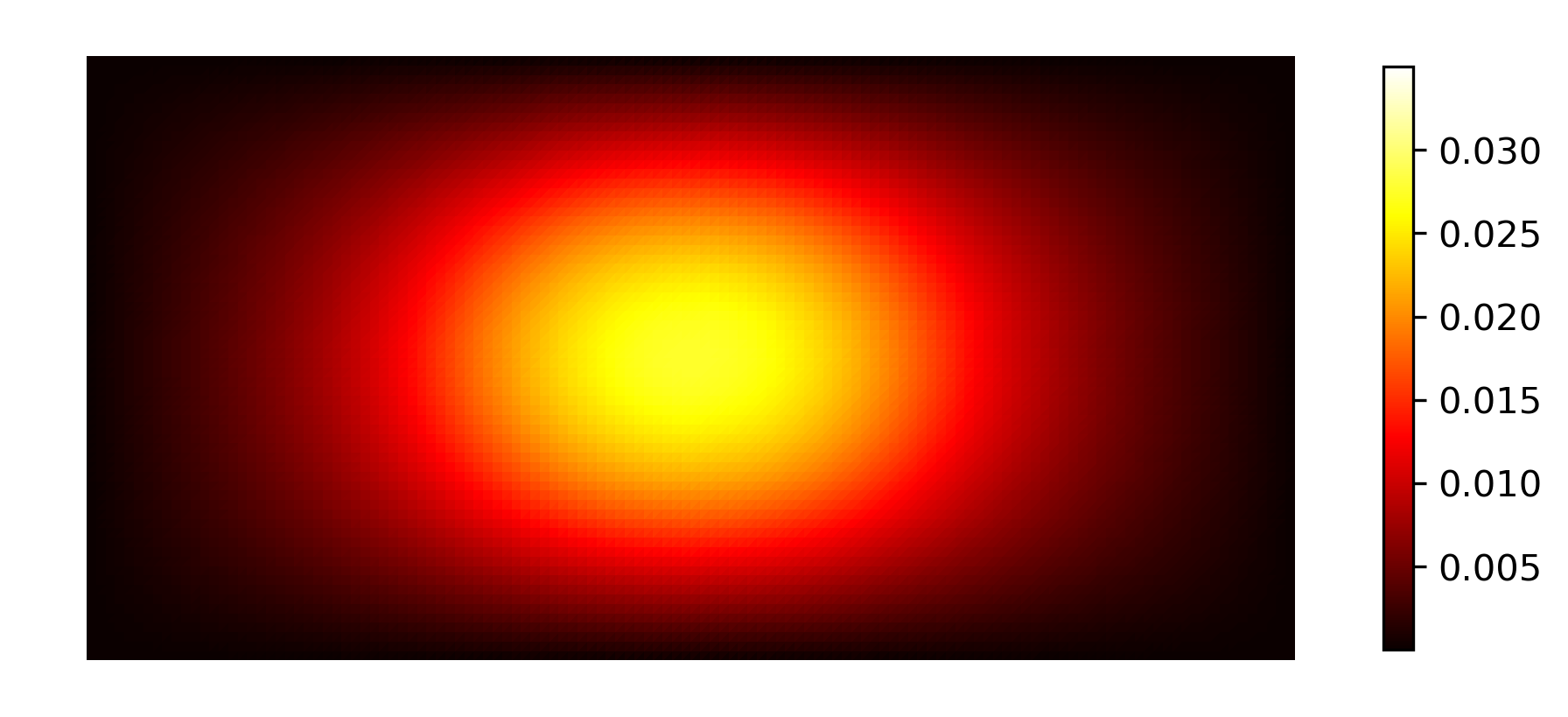}  
\end{center}
\caption{We consider the extremal eigenvalue problem \eqref{e:EigOpt} on the rectangle  $\Omega = [0,2]\times[0,1]$ with $f_- = 1$, $f_+ = 10$, and $\delta = 0.2$. 
We plot the iteration $k$ vs. $\lambda_1^* - \lambda_1(f_k)$ {\bf (top left)} and $\|f_{k+1} - f_{k}\|_{L^2(\Omega)}${\bf (top right)} for the 
rearrangement method (RM) \eqref{e:RM},  
accelerated RM (ARM) \eqref{e:ARM}, and 
restarted ARM (RARM) \eqref{e:RARM}.  
In the middle panels, we plot the initial density $f_0$ {\bf (middle left)} and initial principal eigenfunction  $u_0$ {\bf (middle right)}. 
In the lower panels, we plot the final density $f^*$ {\bf (bottom left)} and final principal eigenfunction  $u^*$ {\bf (bottom right)}. 
See \cref{s:CompRes2Deig} for details.}
\label{fig:2dEigConvergence}
\end{figure}

\begin{figure}[t!]
\begin{center}
\includegraphics[width=0.45\textwidth]{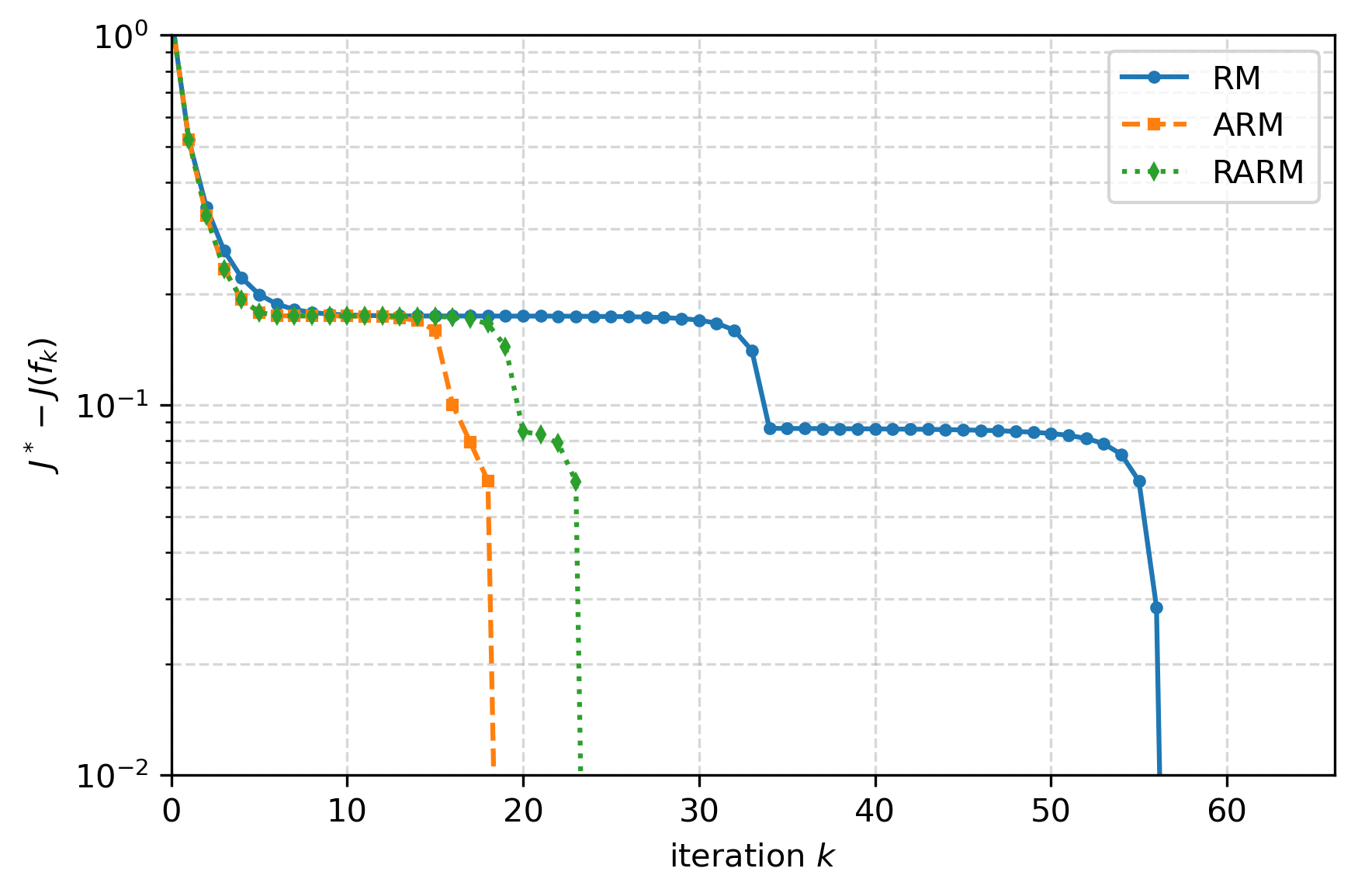}
\includegraphics[width=0.45\textwidth]{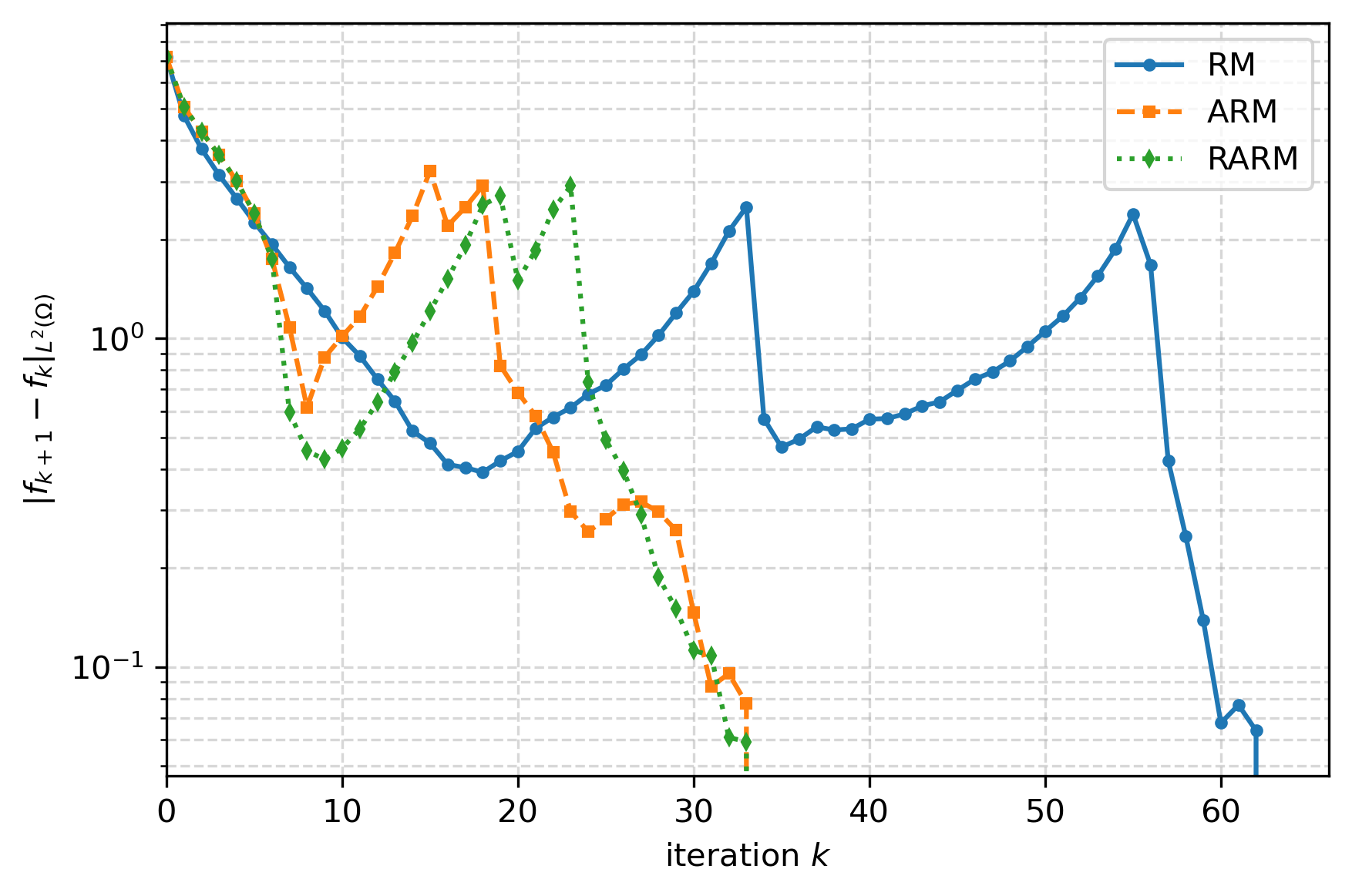} \\ 
\medskip
\includegraphics[height=0.40\textwidth]{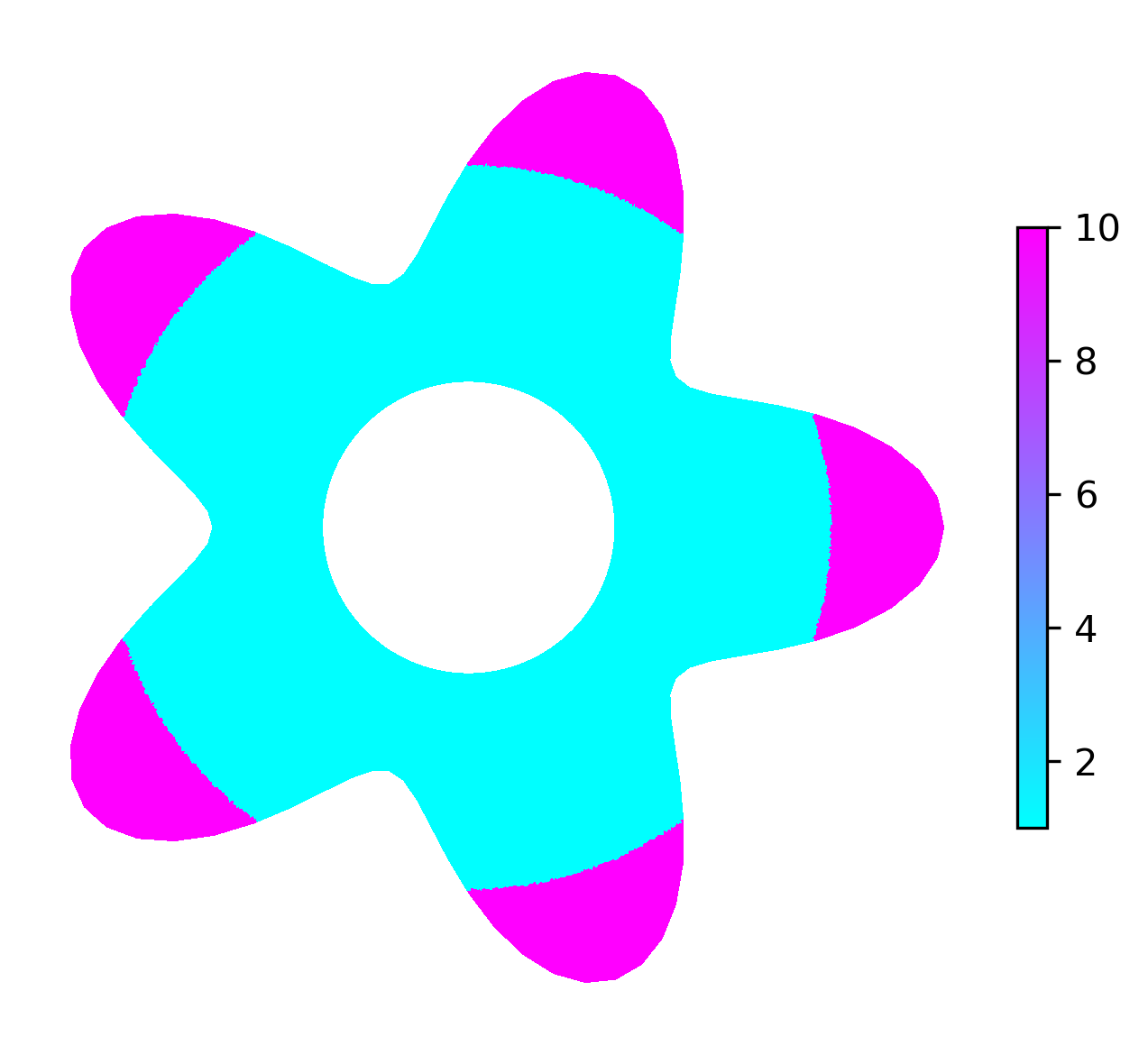}
\includegraphics[height=0.40\textwidth]{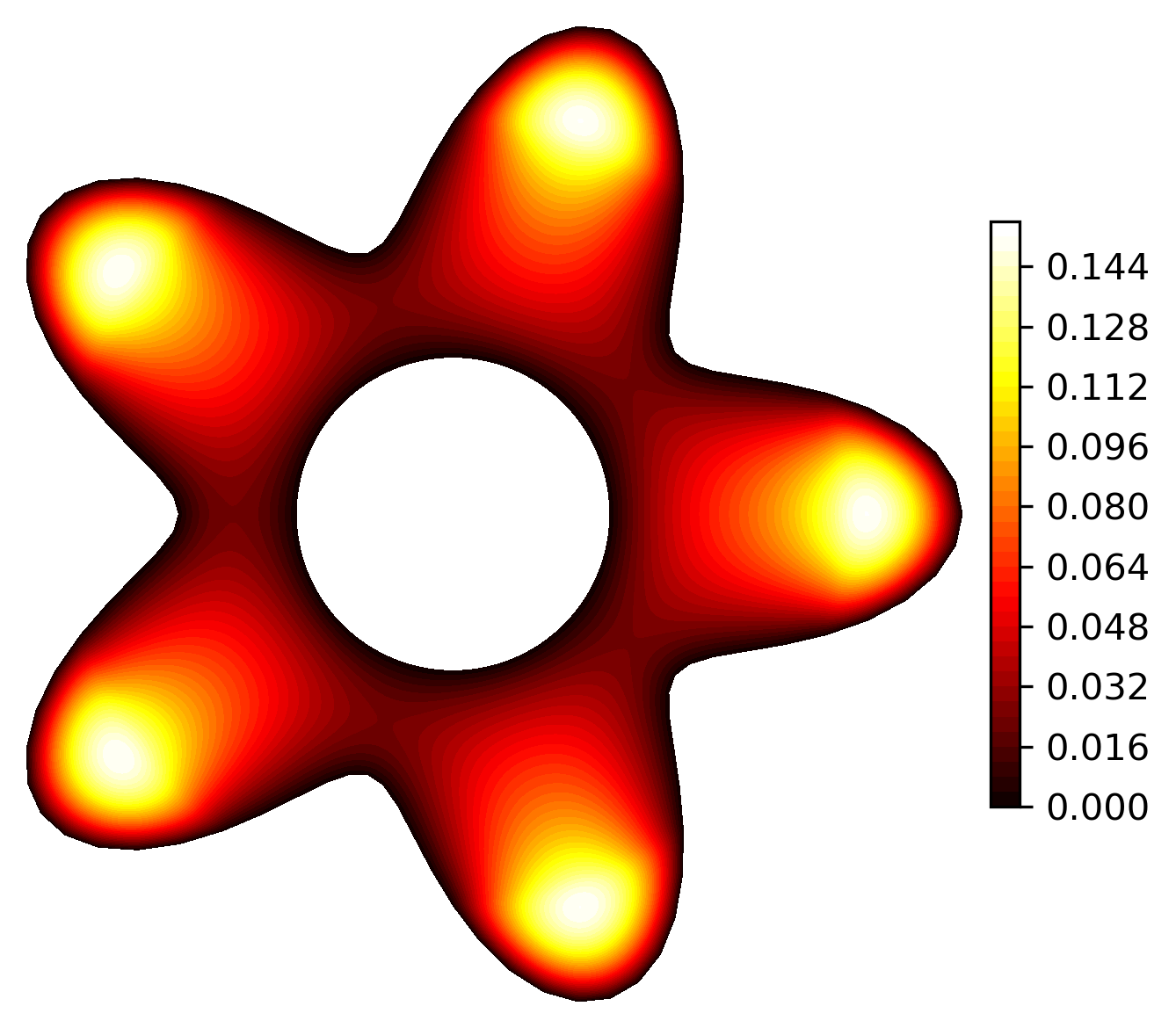} \\ 
\medskip
\includegraphics[height=0.40\textwidth]{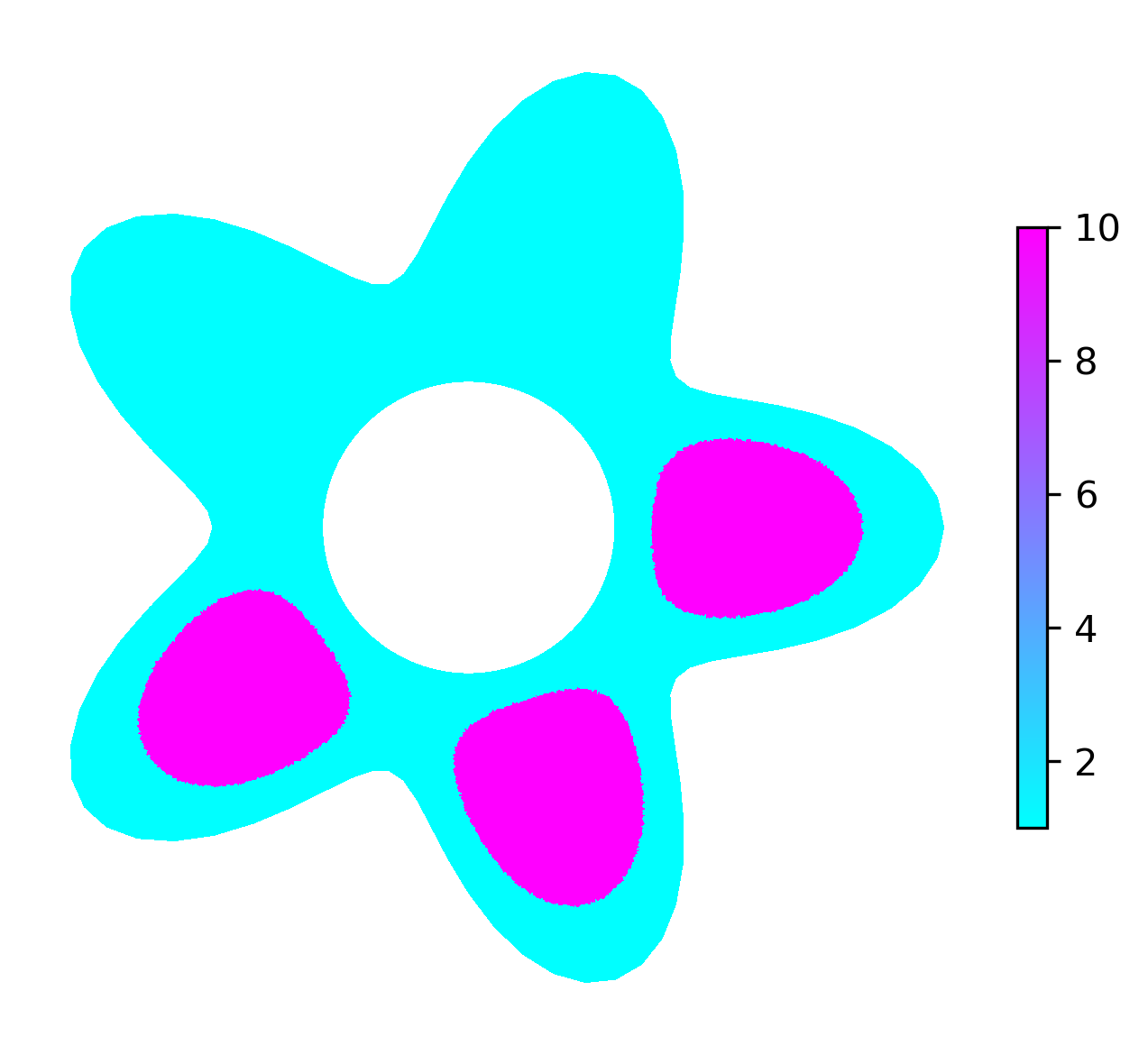}
\includegraphics[height=0.40\textwidth]{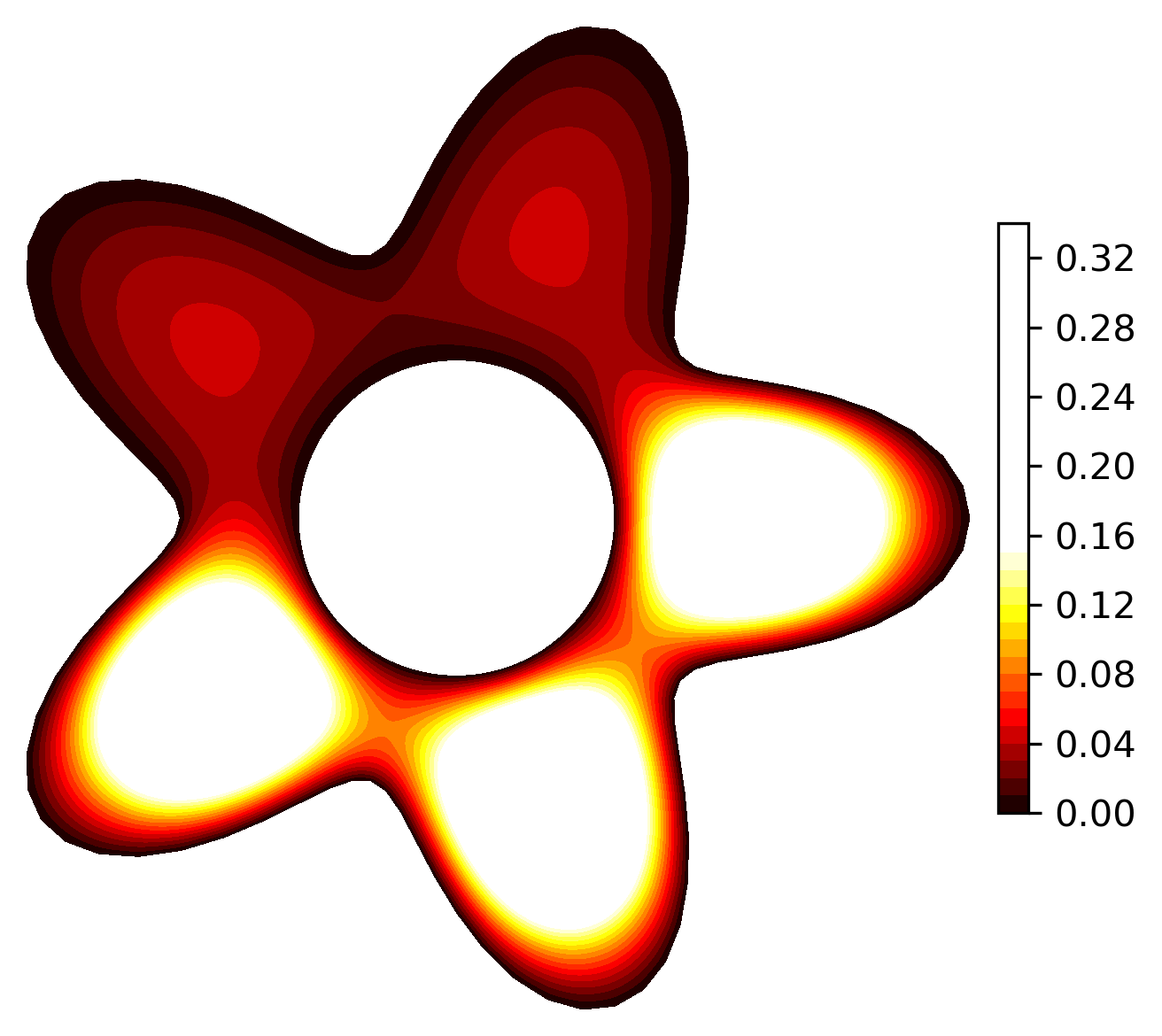} 
\end{center}
\caption{We consider  \eqref{e:PoissonOpt} on a punctured starfish domain with  $f_- = 1$, $f_+ = 10$, and $\delta = 0.25$. 
We plot the iteration $k$ vs. $J^* - J(f_k)$ {\bf (top left)} and $\|f_{k+1} - f_{k}\|_{L^2(\Omega)}${\bf (top right)} for the 
rearrangement method (RM) \eqref{e:RM},  
accelerated RM (ARM) \eqref{e:ARM}, and 
restarted ARM (RARM) \eqref{e:RARM}. 
In the middle panels, we plot the initial density $f_0$ {\bf (middle left)} and initial solution  $u_0$ {\bf (middle right)}. 
In the lower panels, we plot the final density $f^*$ {\bf (bottom left)} and final solution  $u^*$ {\bf (bottom right)}. 
See \cref{s:CompStarfish} for details.}
\label{fig:Starfish}
\end{figure}

\begin{figure}[p!]
\begin{center}
\includegraphics[width=0.44\textwidth,trim={8cm 3cm 8cm 4cm},clip]{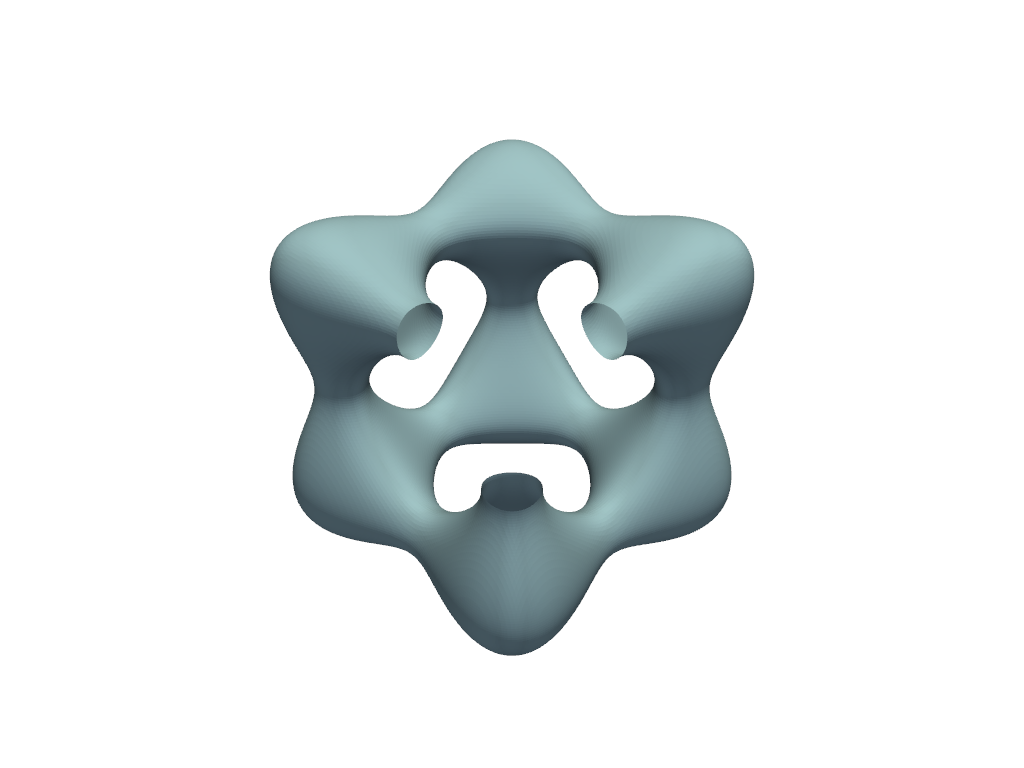}
\includegraphics[width=0.44\textwidth,trim={8cm 3cm 8cm 4cm},clip]{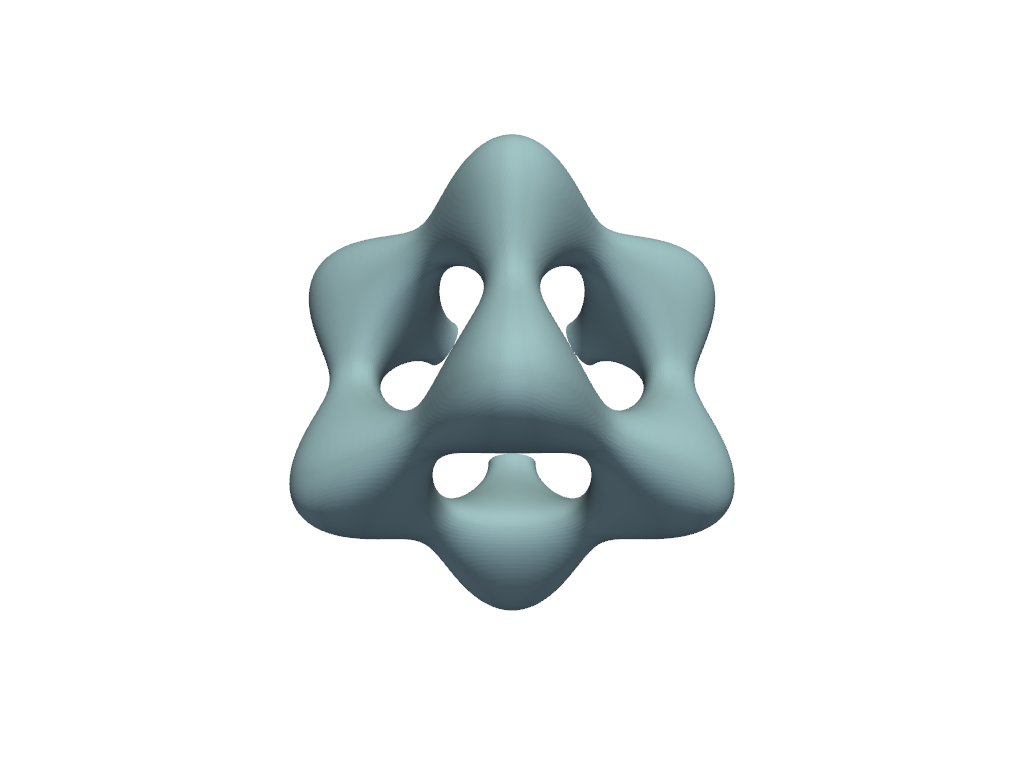} \\
\bigskip
\includegraphics[width=0.45\textwidth]{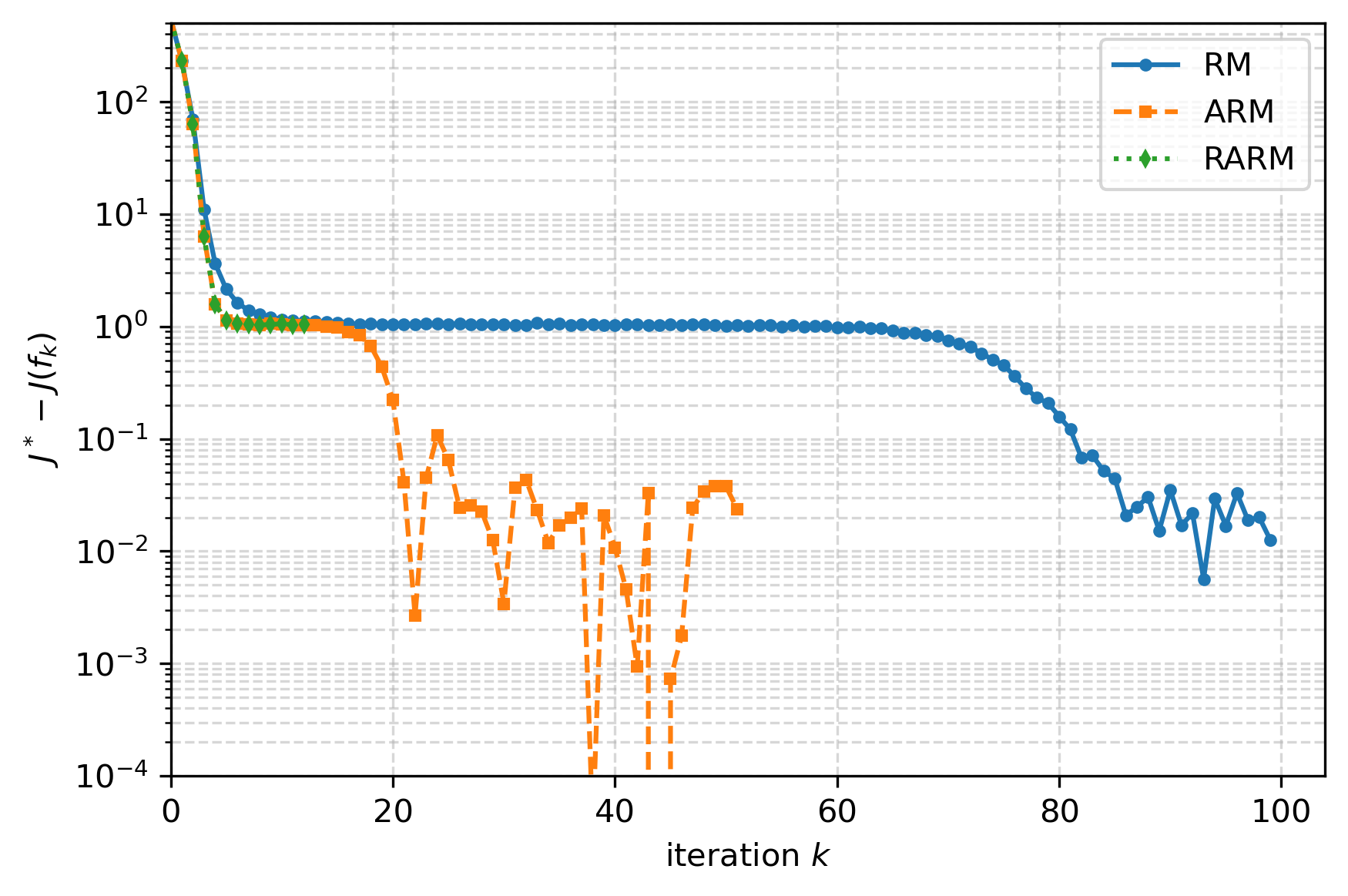}
\includegraphics[width=0.45\textwidth]{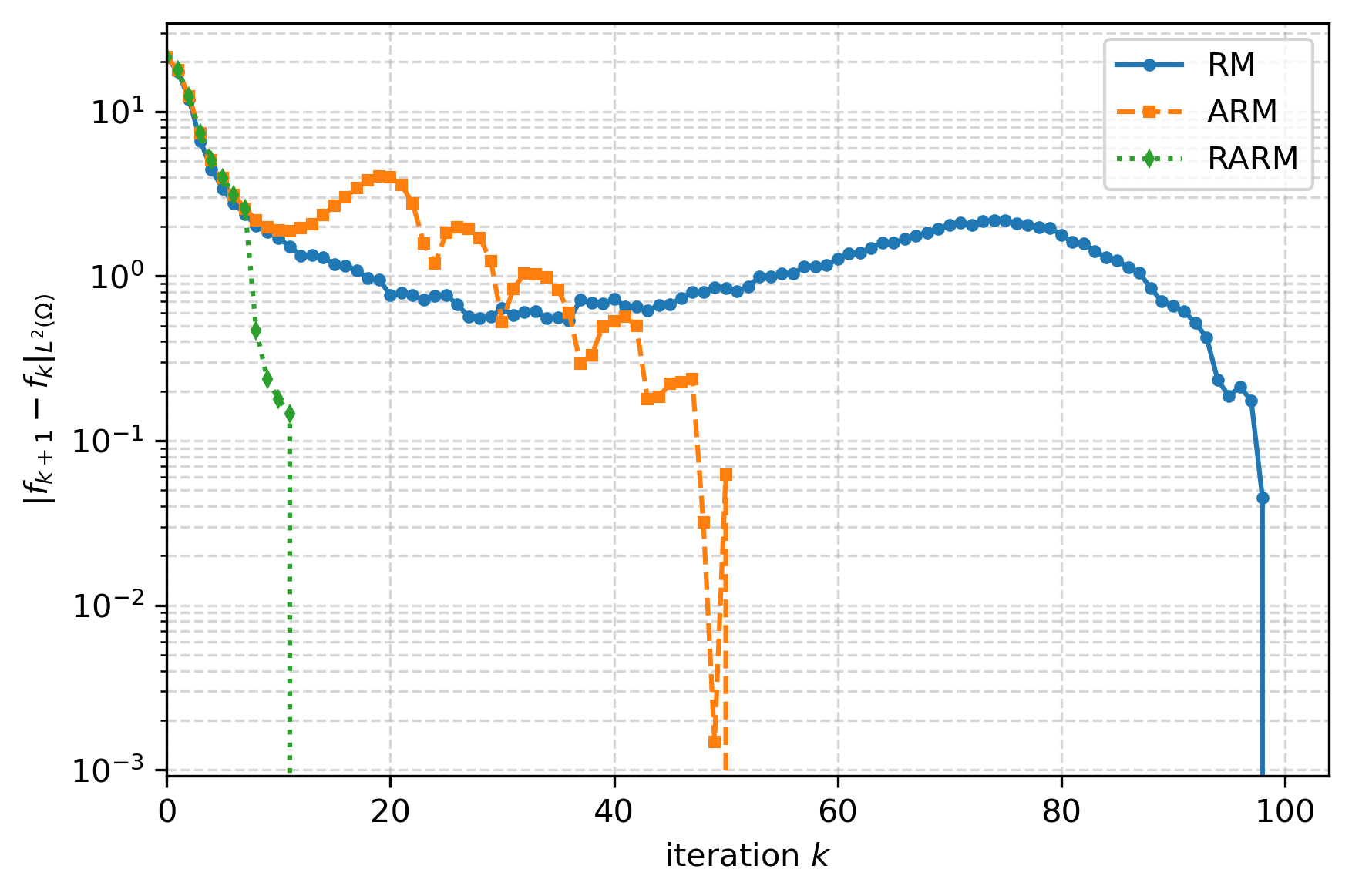}
\end{center}
\caption{ We consider  problem \eqref{e:PoissonOpt} on the cut Gourat surface with  $f_- = 1$, $f_+ = 10$, and $\delta = 0.25$. 
{\bf (top)} Two views of the cut Gourat surface looking towards the origin from $(x,y,z) = (6, 6, 6)$ {\bf (left)} and $(-4,-4,-4)$  {\bf (right)}.
{\bf (bottom)} We plot the iteration $k$ vs. $J^* - J(f_k)$ {\bf (left)} 
and $\|f_{k+1} - f_{k}\|_{L^2(\Omega)}${\bf (right)} for the 
rearrangement method (RM) \eqref{e:RM},  
accelerated RM (ARM) \eqref{e:ARM}, and 
restarted ARM (RARM) \eqref{e:RARM}. 
See \cref{s:CompResGourat} for details.}
\label{fig:CompResGourat}
\end{figure}

\begin{figure}[p!]
\begin{center}
\includegraphics[width=0.36\textwidth,trim={8cm 0 0 4cm},clip]{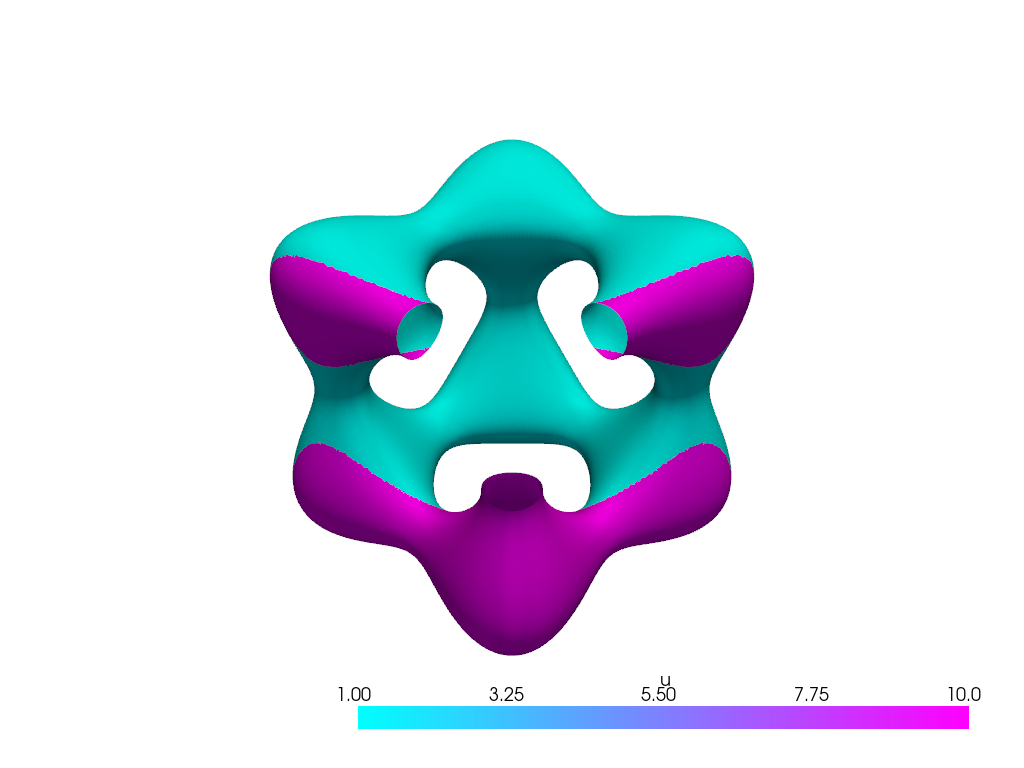}
\includegraphics[width=0.36\textwidth,trim={8cm 0 0 4cm},clip]{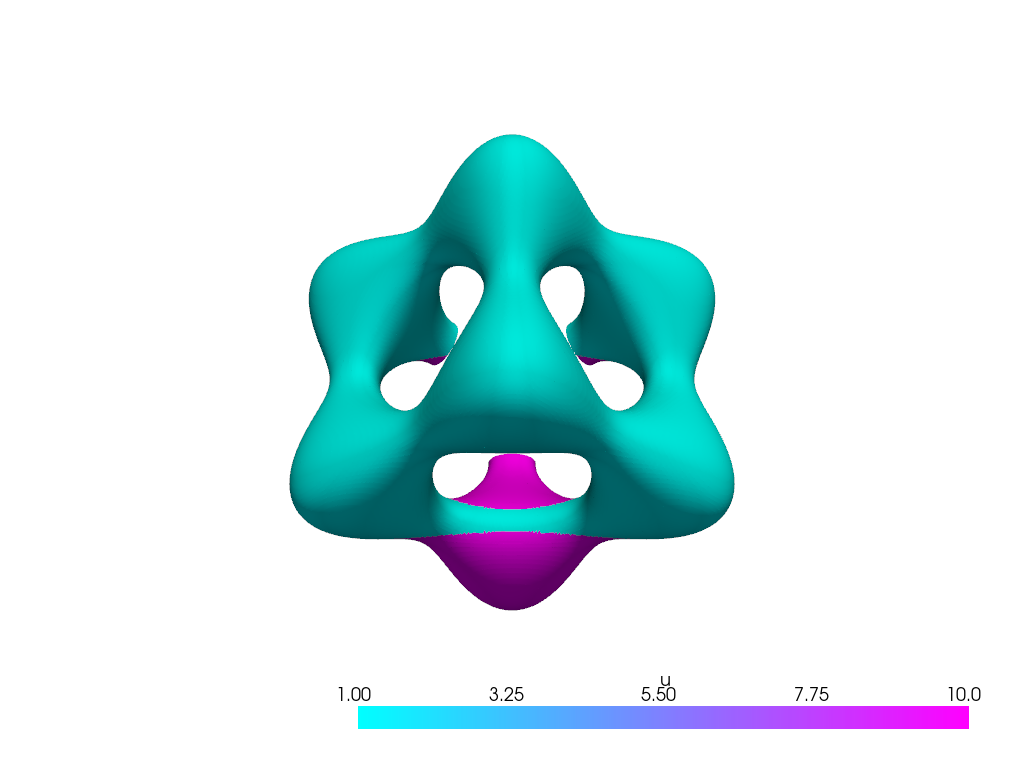} \\
\includegraphics[width=0.36\textwidth,trim={8cm 0 0 4cm},clip]{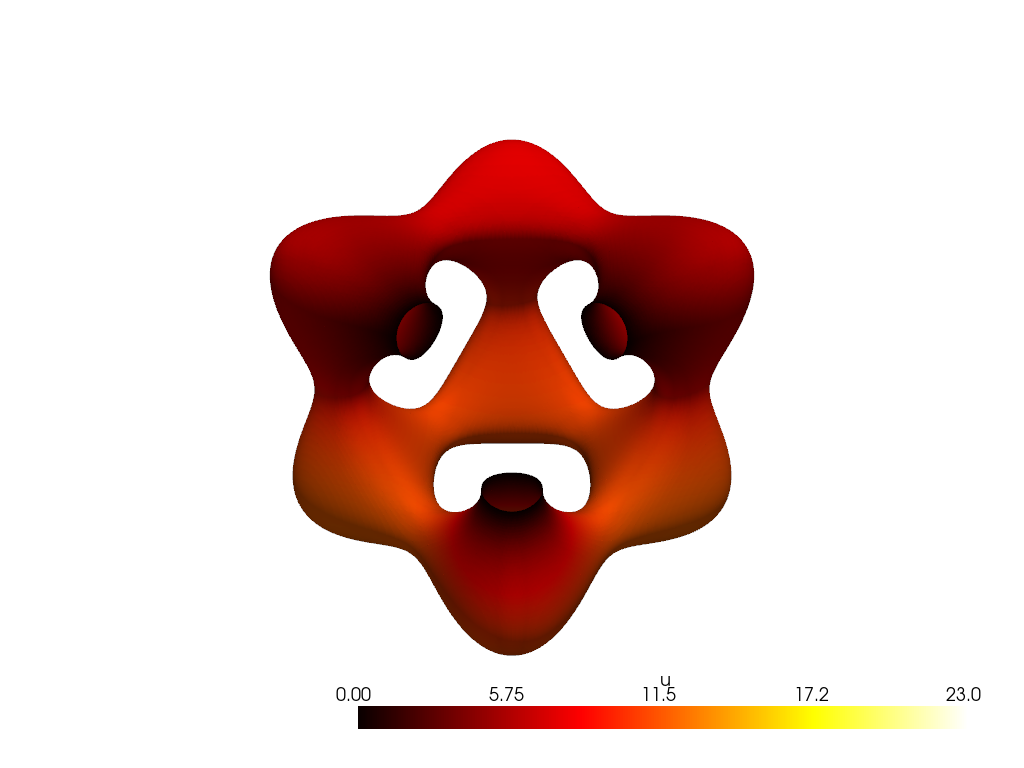} 
\includegraphics[width=0.36\textwidth,trim={8cm 0 0 4cm},clip]{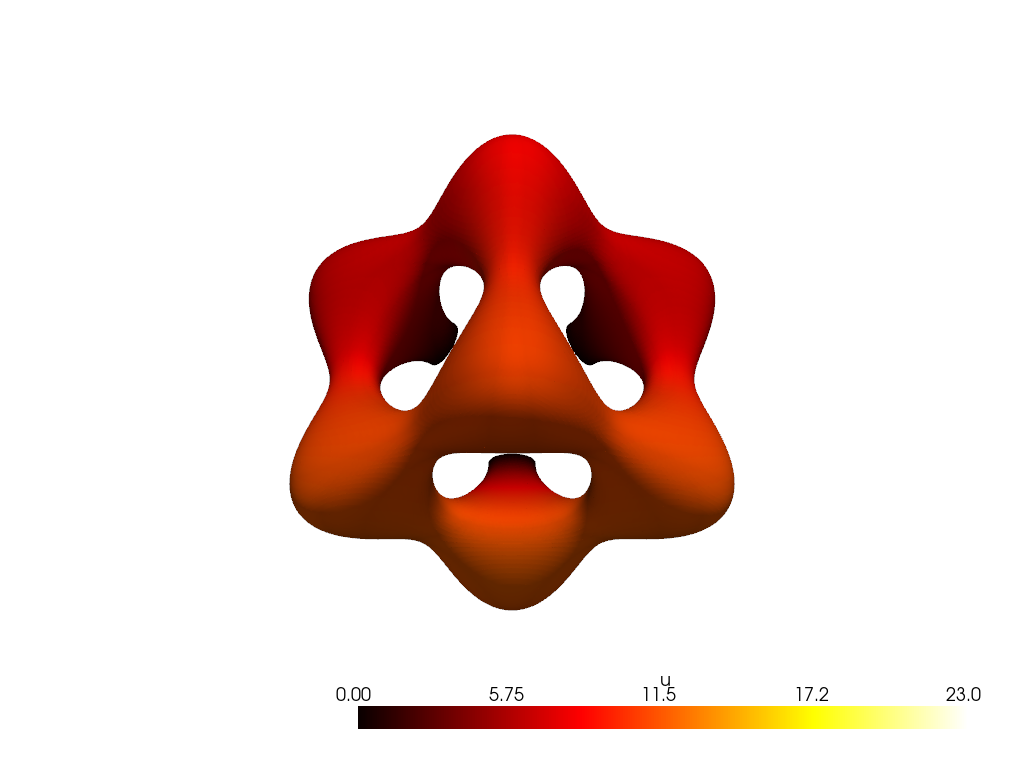} \\
\bigskip 
\includegraphics[width=0.36\textwidth,trim={8cm 0 0 4cm},clip]{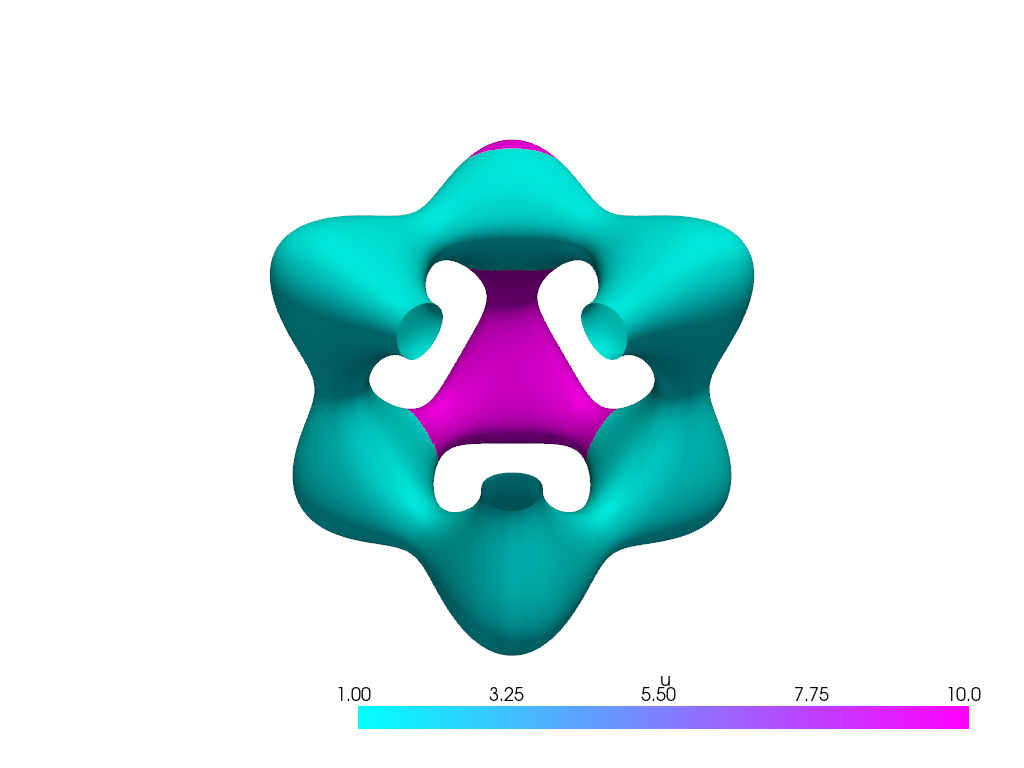}
\includegraphics[width=0.36\textwidth,trim={8cm 0 0 4cm},clip]{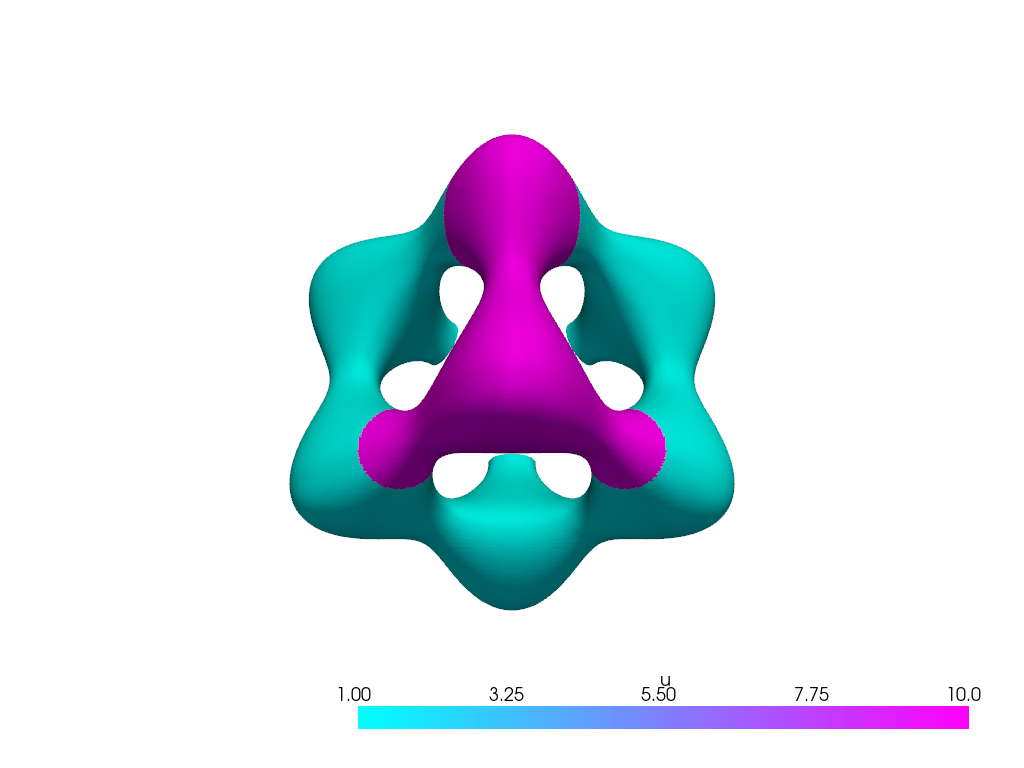} \\
\includegraphics[width=0.36\textwidth,trim={8cm 0 0 4cm},clip]{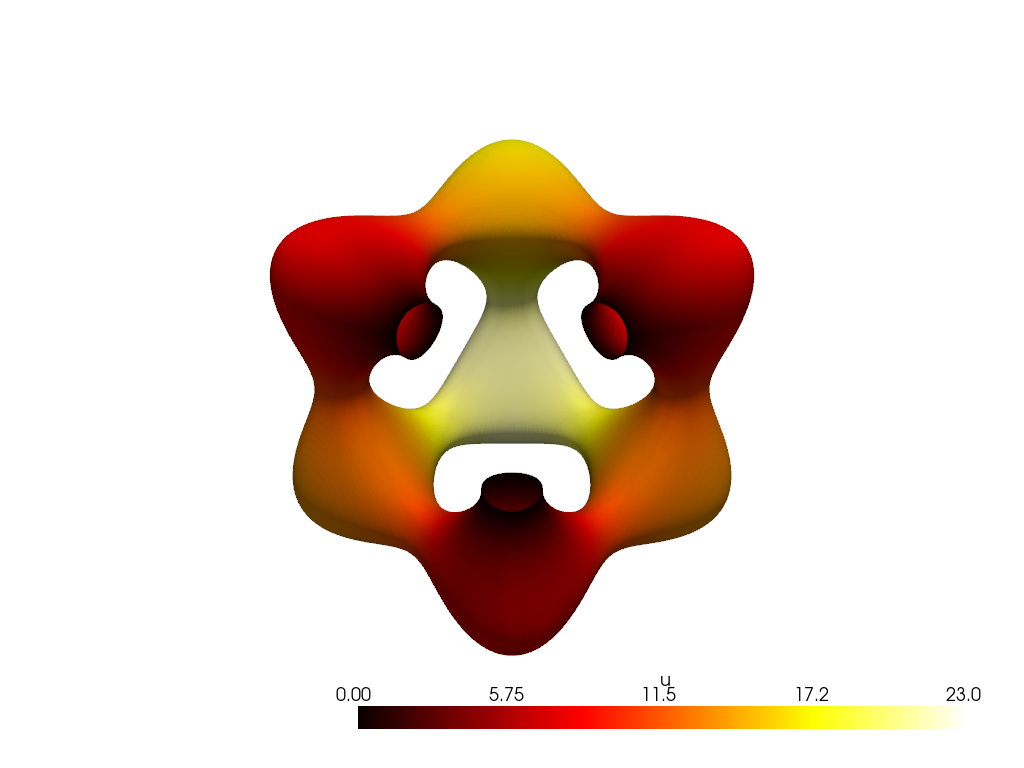}
\includegraphics[width=0.36\textwidth,trim={8cm 0 0 4cm},clip]{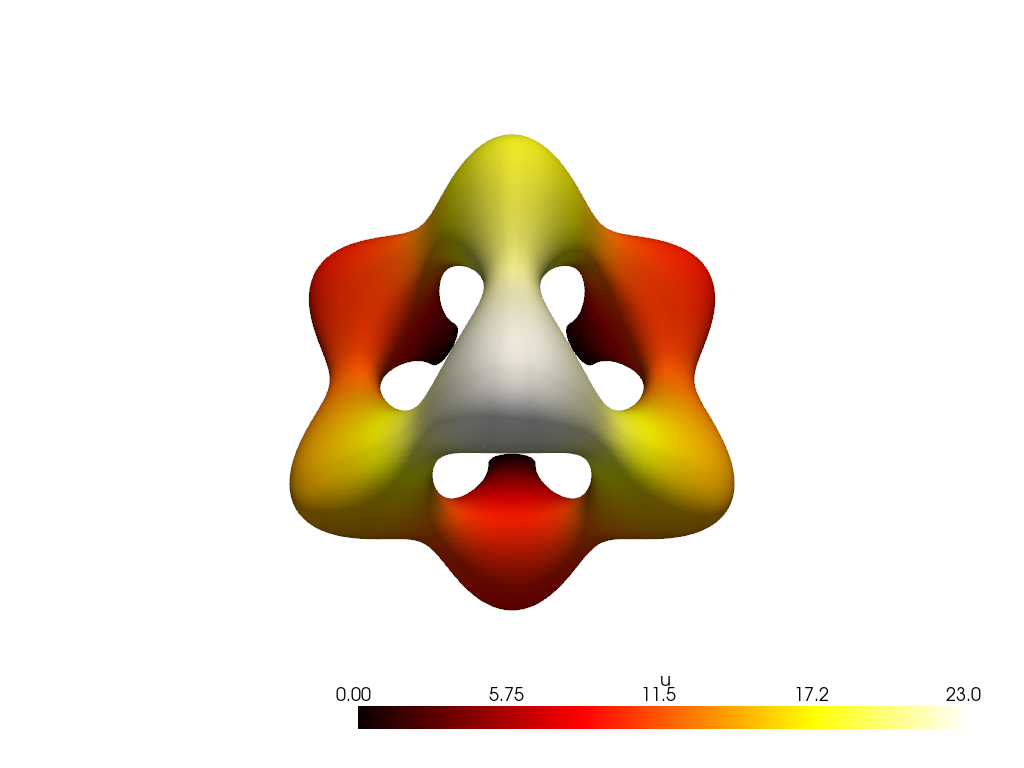}
\end{center}
\caption{{\bf (rows 1 and 2)} The initial $f_0$ and $u_0$ from the same two views as in \cref{fig:CompResGourat}(top). 
{\bf (rows 3 and 4)} The final $f_*$ and $u_*$ from the same two views. 
See \cref{s:CompResGourat} for details.}
\label{fig:CompResGourat2}
\end{figure}

\end{document}